\numberwithin{equation}{section}
\DeclareFontFamily{U}{mathb}{\hyphenchar\font45}
\DeclareFontShape{U}{mathb}{m}{n}{
      <5> <6> <7> <8> <9> <10> gen * mathb
      <10.95> mathb10 <12> <14.4> <17.28> <20.74> <24.88> mathb12
      }{}
\DeclareSymbolFont{mathb}{U}{mathb}{m}{n}
\DeclareMathSymbol{\righttoleftarrow}{3}{mathb}{"FD}
\newcommand{\eqto}{\stackrel{\lower1.5pt\hbox{$\scriptstyle\sim\,$}}\to}
\newcommand{\eqdashto}{\stackrel{\lower1.5pt\hbox{$\scriptstyle\sim\,$}}\dashrightarrow}
\newcommand{\actsfromright}{\righttoleftarrow}
\theoremstyle{plain}
\newtheorem{prop}{Proposition}[section]
\newtheorem{theo}[prop]{Theorem}
\newtheorem{coro}[prop]{Corollary}
\theoremstyle{definition}
\newtheorem{defi}[prop]{Definition}
\newtheorem{rema}[prop]{Remark}
\newtheorem{exam}[prop]{Example}
\newcommand{\bC}{\mathbb C}
\newcommand{\bF}{\mathbb F}
\newcommand{\bN}{\mathbb N}
\newcommand{\bP}{\mathbb P}
\newcommand{\bQ}{\mathbb Q}
\newcommand{\bR}{\mathbb R}
\newcommand{\bZ}{\mathbb Z}
\newcommand{\cE}{\mathcal E}
\newcommand{\cI}{\mathcal I}
\newcommand{\cK}{\mathcal K}
\newcommand{\cO}{\mathcal O}
\newcommand{\rA}{\mathrm A}
\newcommand{\rD}{\mathrm D}
\newcommand{\rE}{\mathrm E}
\newcommand{\rH}{\mathrm H}
\newcommand{\rL}{\mathrm L}
\newcommand{\rN}{\mathrm N}
\newcommand{\rM}{\mathrm M}
\newcommand{\rU}{\mathrm U}
\newcommand{\Bl}{\operatorname{Bl}}
\newcommand{\rk}{\operatorname{rk}}
\newcommand{\dual}{\operatorname{dual}}
\newcommand{\Gr}{\operatorname{Gr}}
\newcommand{\Pic}{\operatorname{Pic}}
\newcommand{\Aut}{\operatorname{Aut}}
\newcommand{\NS}{\mathrm{NS}}
\newcommand{\Ex}{\mathrm{Ex}}
\newcommand{\Bir}{\mathrm{Bir}}
\newcommand{\ra}{\rightarrow}
\author{Brendan Hassett}
\address{Department of Mathematics\\
Brown University
Box 1917 \\
151 Thayer Street,
Providence, RI 02912 \\
USA}
\email{brendan\underline{ }hassett@brown.edu}
\author{Yuri Tschinkel}
\address{Courant Institute\\
                New York University \\
                New York, NY 10012 \\
                USA }
\email{tschinkel@cims.nyu.edu}
\address{Simons Foundation\\
160 Fifth Avenue\\
New York, NY 10010\\
USA}
\title{Involutions on K3 surfaces and derived equivalence}
\begin{document}
\date{\today}

\begin{abstract}
We study involutions on K3 surfaces under conjugation by derived 
equivalence and more general relations, together with applications
to equivariant birational geometry.  
\end{abstract}

\maketitle

\section{Introduction}
\label{sect:intro}

The structure of $\Aut D^b(X)$, 
the group of 
autoequivalences of the bounded derived category $D^b(X)$ of a K3 surface $X$, is very rich but well-understood only when the Picard group $\Pic(X)$ has rank one \cite{BayerBridgeland}. 
The automorphism group $\Aut(X)$ of $X$ lifts to 
$\Aut D^b(X)$, and one may consider the problem of classification of finite subgroups 
$G\subset \Aut(X)$ up to conjugation -- either by automorphisms, derived equivalence, or even larger groups.  
This problem is already interesting for cyclic $G$, and even for involutions, e.g., Enriques or Nikulin involutions. There is an extensive literature classifying 
these involutions on a given K3 surface $X$: topological types,
moduli spaces of polarized K3 surfaces with involution, and the involutions on a single $X$ up to automorphisms, see, e.g., \cite{AN}, \cite{Geemen-S}, \cite{Ohashi}, \cite{ShiV}, \cite{Zhang}.

Here we investigate involutions up to derived equivalence, i.e., derived equivalences respecting involutions. Our interest in ``derived" phenomena was sparked by a result in \cite{Sosna}---there exist complex conjugate, derived equivalent nonisomorphic K3 surfaces---as well as our investigations of arithmetic problems on K3 surfaces \cite{HT-derived}, \cite{HT22}.

One large class of involutions $\sigma:X\rightarrow X$ are those whose quotient $Q=X/\sigma$ is rational. Examples include
$Q$ a del Pezzo surface and $X \rightarrow Q$ a double cover 
branched along a smooth curve $B \in |-2K_Q|$. We may allow
$Q$ to have ADE surface singularities away from $B$, or $B$ to have
ADE curve singularities; then we take $X$ as the minimal 
resolution of the resulting double cover of $Q$. These
were studied by Alexeev and Nikulin in connection with classification questions concerning singular del Pezzo surfaces \cite{AN}. 
Our principal result here (see Section~\ref{sect:geni}) is that
\begin{itemize}
\item equivariant derived
equivalences of such $(X,\sigma)$ are in fact equivariant isomorphisms  
(see Corollary~\ref{coro:antisyminv}).  \end{itemize}

Our study of stable equivalence of lattices with involution
leads us to a notion of {\em skew equivalence},
presented in Section~\ref{sect:skew}. Here, duality
interacts with the involution which is reflected 
in a functional equations for the Fourier-Mukai kernel.
Explicit examples, for anti-symplectic actions with
quotients equal to $\bP^2$, are presented in Section~\ref{sect:ratq}.

Next, we focus on {\em Nikulin} involutions $\iota : X\to X$, 
i.e., involutions preserving the symplectic form, so that the resolution of singularities $Y$ of the 
resulting quotient $X/\iota$ is a K3 surface. A detailed study of such involutions can be found in \cite{Geemen-S}. In addition to the polarization class, the Picard group  $\Pic(X)$ contains the lattice $\rE_8(-2)$; van Geemen and Sarti describe the moduli and the geometry in the case of minimal Picard rank $\rk \Pic(X)=9$. In Section~\ref{sect:nik}, we extend their results to higher ranks, and 
\begin{itemize}
\item 
exhibit nontrivial derived equivalences between Nikulin involutions (Proposition~\ref{prop:nik-der}). 
\end{itemize}
These, in turn, allow us to construct in Section~\ref{sect:geom} examples of equivariant birational isomorphisms $\phi:\bP^4\dashrightarrow \bP^4$ 
with nonvanishing invariant $C_G(\phi)$, introduced in \cite{LSZ}, \cite{LSh} and extended to the equivariant context in \cite{KT-map}.

The case of {\em Enriques} involutions $\epsilon:X\to X$, i.e., fixed-point free involutions, so that the resulting quotient $X/\epsilon$ is an Enriques surface, has also received considerable attention.
There is a parametrization of such involutions
in terms of the Mukai lattice $\widetilde{\rH}(X)$, 
and an explicit description of conjugacy classes, up to automorphisms $\Aut(X)$, in interesting special cases, e.g., for K3 surfaces of Picard rank 11, 
Kummer surfaces of product type, general Kummer surfaces, or singular K3 surfaces
\cite{kondo}, \cite{Ohashi}, \cite{Sert}, \cite{ShiV}. 
In Section~\ref{sect:genEnr} we observe that 
\begin{itemize}
\item the existence of an Enriques involution on a K3 surface $X$ implies that every derived equivalent surface is equivariantly isomorphic to $X$ (Propositions~\ref{prop:enr-derived} and \ref{prop:any});
\item while there are {\em no} nontrivial equivariant derived autoequivalences, we exhibit nontrivial {\em orientation reversing} (i.e., skew) equivalences, e.g., on singular K3 surfaces.
\end{itemize}

\

\noindent
{\bf Acknowledgments:} 
The first author was partially supported by Simons Foundation Award 546235 and NSF grants 1701659 and 1929284,
the second author by NSF grant 2000099.  We are grateful to Nicolas Addington, Arend Bayer, Igor Dolgachev, Sarah Frei, Lisa Marquand, and Barry Mazur for helpful suggestions.

\section{Lattice results}
\label{sect:lattices}

We recall basic terminology and results concerning lattices: torsion-free finite-rank abelian groups $\mathrm L$ together with a nondegenerate integral quadratic form $(\cdot, \cdot)$, which we assume to be even. Basic examples are 
$$
\mathrm U =  \left( \begin{matrix} 0 & 1 \\ 1 & 0 \end{matrix} \right)
$$
and positive definite lattices associated with Dynkin diagrams (denoted by the same letter). 

We write $\mathrm L(2)$, when the form is multiplied by $2$. We let 
$$
d(\rL):=\rL^*/\rL
$$ 
be the discriminant group and 
$$
q_{\rL}:d(\rL)\to \bQ/2\bZ
$$ 
the induced discriminant quadratic form.

\subsection*{Nikulin's form of Witt cancellation:}

\begin{prop} \cite[Cor.~1.13.4]{nik-lattice} \label{prop:nikulin1}
Given an even lattice $\mathrm L$, $\mathrm L \oplus \mathrm U$ is the unique lattice with
its signature and discriminant quadratic form.
\end{prop}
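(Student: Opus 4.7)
The plan is to deduce this from Nikulin's general uniqueness theorem for even indefinite lattices of sufficiently high rank (Theorem~1.13.2 of \cite{nik-lattice}): an even indefinite lattice $\mathrm M$ is the unique lattice in its genus, i.e.\ uniquely determined by its signature and discriminant quadratic form, provided that
\[
\rk(\mathrm M) \;\geq\; \ell\bigl(d(\mathrm M)\bigr) + 2,
\]
where $\ell$ denotes the minimal number of generators of the discriminant group. Set $\mathrm M := \mathrm L \oplus \mathrm U$; I would verify that $\mathrm M$ meets these hypotheses.

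First, $\mathrm M$ is indefinite: the hyperbolic plane $\mathrm U$ contributes signature $(1,1)$, so $\mathrm M$ has at least one positive and one negative eigenvalue regardless of $\mathrm L$. Second, $\mathrm U$ is unimodular, so the inclusion $\mathrm L \hookrightarrow \mathrm M$ induces an isometry of discriminant forms
\[
\bigl(d(\mathrm M), q_{\mathrm M}\bigr) \;\cong\; \bigl(d(\mathrm L), q_{\mathrm L}\bigr).
\]
In particular $\ell(d(\mathrm M)) = \ell(d(\mathrm L))$. Third, since $d(\mathrm L) = \mathrm L^*/\mathrm L$ is a quotient of the rank-$\rk(\mathrm L)$ abelian group $\mathrm L^*$, we have $\ell(d(\mathrm L)) \leq \rk(\mathrm L)$, whence
\[
\rk(\mathrm M) \;=\; \rk(\mathrm L) + 2 \;\geq\; \ell(d(\mathrm L)) + 2 \;=\; \ell(d(\mathrm M)) + 2.
\]

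The hypotheses of Nikulin's theorem are satisfied, so $\mathrm M$ is the unique even lattice with its signature and discriminant form, which is the claim.

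The main obstacle is, of course, not in this reduction but in the underlying general theorem of Nikulin itself, whose proof uses the surjectivity of $\mathrm O(\mathrm M) \to \mathrm O(q_{\mathrm M})$ and strong approximation arguments for spin groups to glue together local isometries into a global one. Once that machinery is granted, the corollary is essentially an arithmetic bookkeeping exercise: the role of the extra $\mathrm U$ summand is simply to absorb two units of rank ``for free,'' since it leaves the discriminant form unchanged, thereby pushing us into the range where the genus consists of a single class.
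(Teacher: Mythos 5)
Your proposal is correct and is essentially the standard derivation: the paper states this result as a direct citation of Nikulin's Corollary~1.13.4, which Nikulin himself obtains exactly as you do, by feeding $\mathrm L \oplus \mathrm U$ into his general uniqueness theorem for even indefinite lattices (the bound $\operatorname{rank} \geq \ell(d) + 2$ plus both signs present in the signature), using that $\mathrm U$ is unimodular so the discriminant form is unchanged while the rank grows by two. The verifications $\ell(d(\mathrm L)) \leq \operatorname{rank}(\mathrm L)$ and $(d(\mathrm L \oplus \mathrm U), q_{\mathrm L \oplus \mathrm U}) \cong (d(\mathrm L), q_{\mathrm L})$ are exactly the right bookkeeping.
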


If lattices $\mathrm L_1$ and $\mathrm L_2$ are stably isomorphic -- become isomorphic
after adding unimodular lattices of the same signature -- then 
$$
\mathrm L_1 \oplus \mathrm U \simeq \mathrm L_2 \oplus \mathrm U.$$

\subsection*{Nikulin stabilization result:}
Given a lattice $\rL$, write $\rL\otimes \bZ_p$
for the induced $p$-adic quadratic form. The genus of $\rL$
is the collection of all lattices equivalent to $\rL$ over
$\bZ_p$ for each prime $p$ and over $\bR$. Stably equivalent
lattices are in the same genus. The $p$-primary
part of $d(\rL)$ depends only on $\rL\otimes \bZ_p$ and is written
$d(\rL\otimes \bZ_p)$. We use $q_{\rL\otimes \bZ_p}$ for the induced discriminant quadratic
form on $d(\rL\otimes \bZ_p)$, with values in the $p$-primary part of
$\bQ/\bZ$ for odd $p$; when $\rL$ is even and $p=2$ it takes values
in the $2$-primary part of $\bQ/2\bZ$. 
For a finitely generated abelian group $A$,
let $\ell(A)$ denote the minimal number of generators.

\begin{prop} \cite[Thm.~1.14.2]{nik-lattice} \label{prop:nikulin2}
Let $\rL$ be an even indefinite lattice satisfying
\begin{itemize}
\item{$\operatorname{rank}(\rL) \ge \ell(d(\rL\otimes \bZ_p))+2$ for all $p\neq 2$;}
\item{if $\operatorname{rank}(\rL) = \ell(d(\rL\otimes \bZ_2))$ then $q_{\rL\otimes \bZ_2}$
contains $u^{(2)}_+(2)$ or $v^{(2)}_+(2)$ as a summand, i.e., the
discriminant quadratic forms of
$$
\rU^{(2)}(2)=\left( \begin{matrix} 0 & 2 \\ 2 & 0 \end{matrix} \right), \quad
\mathrm V^{(2)}(2) = \left( \begin{matrix} 4 & 2 \\ 2 & 4 \end{matrix} \right).
$$
}
\end{itemize}
Then the genus of $\rL$ admits a unique class and 
$\mathrm O(\rL) \rightarrow \mathrm O(q_{\rL})$ is surjective.
\end{prop}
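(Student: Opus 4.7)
The plan is to reduce both assertions---uniqueness of the genus class, and surjectivity of $\mathrm O(\rL)\to \mathrm O(q_\rL)$---to the local classification of $\bZ_p$-lattices, and then to invoke a strong-approximation / Eichler-type argument exploiting indefiniteness. The two conclusions are linked: once the genus of $\rL$ is shown to be determined by the pair $(\operatorname{sign}(\rL),q_\rL)$, any element of $\mathrm O(q_\rL)$ can be read as an identification of two lattices in the same genus, and surjectivity amounts to realizing such an identification by a global isometry of $\rL$.

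First I would analyze uniqueness prime by prime. At each $p$, the genus specializes to classifying $\bZ_p$-lattices with prescribed rank, determinant class, and discriminant form $q_{\rL_p}$. For odd $p$, every $\bZ_p$-lattice is diagonalizable, so the Jordan decomposition $\rL_p=\bigoplus_k p^k \rM_k$ is essentially canonical; the hypothesis $\operatorname{rank}(\rL)\ge \ell(d(\rL_p))+2$ guarantees that the unimodular block $\rM_0$ has rank at least $2$, which provides the ``slack'' needed to absorb the square-class invariant into $\rM_0$ and forces the remaining Jordan pieces to be determined by $q_{\rL_p}$. At $p=2$ the Jordan pieces include the diagonal summands $\langle a\rangle$ together with the even blocks $\rU^{(k)}(2^k)$ and $\rV^{(k)}(2^k)$; here mixtures of even and odd scales at the same level can absorb one another, and the hypothesis that $q_{\rL_2}$ contains $u^{(2)}_+(2)$ or $v^{(2)}_+(2)$ is precisely what rigidifies the decomposition in the extremal case $\operatorname{rank}(\rL)=\ell(d(\rL_2))$.

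Next I would promote the local classification to a global statement using indefiniteness. By Proposition~\ref{prop:nikulin1}, stable isomorphism of $\rL$ and $\rL'$ in the same genus is implied by the agreement of their discriminant forms, so it suffices to show that $\rL\oplus \rU \simeq \rL'\oplus \rU$ implies $\rL\simeq \rL'$: but this follows immediately from that same proposition. For surjectivity, given $\bar\phi\in \mathrm O(q_\rL)$, I would stabilize to $\rL\oplus \rU$, where the presence of a hyperbolic plane allows me to lift $\bar\phi$ to an isometry of $(\rL\oplus \rU)_p$ for every $p$. Then, since $\rL\oplus \rU$ is indefinite of rank at least $3$ and isotropic at every prime, Eichler's theorem on spinor genera (equivalently strong approximation for $\mathrm{Spin}$) produces a global isometry of $\rL\oplus\rU$ realizing $\bar\phi$ on the discriminant form. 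A final application of the Witt-cancellation statement of Proposition~\ref{prop:nikulin1} restricts this global isometry to the desired element of $\mathrm O(\rL)$.

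The main obstacle I anticipate is the $2$-adic analysis. At odd primes, diagonalization is automatic and the rank condition transparently gives enough room to absorb all invariants beyond $q_{\rL_p}$. At $p=2$, by contrast, one must carefully manage the dichotomy between even and odd Jordan blocks, the two distinct binary unimodular forms $\rU^{(2)}(2)$ and $\rV^{(2)}(2)$, and the fact that the discriminant quadratic form rather than the bilinear form carries the finer $\bZ/8$-valued invariants. Showing that the presence of $u^{(2)}_+(2)$ or $v^{(2)}_+(2)$ is exactly the right hypothesis to eliminate the residual ambiguity---so that stable $\bZ_2$-isomorphism upgrades to honest isomorphism---is where the technical heart of Nikulin's argument lies, and this is the step that would consume the bulk of the work.
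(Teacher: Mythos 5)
The paper offers no proof of this statement: it is quoted verbatim from Nikulin \cite[Thm.~1.14.2]{nik-lattice}, so the comparison is with Nikulin's own argument. Your overall architecture --- a prime-by-prime Jordan-block analysis, with the rank hypothesis supplying a unimodular block of rank $\ge 2$ at odd primes and the $u^{(2)}_+(2)$/$v^{(2)}_+(2)$ condition rigidifying the $2$-adic decomposition, followed by a global step via Eichler--Kneser spinor genus theory and strong approximation --- is indeed the shape of that argument, and your diagnosis that the $2$-adic case is the technical heart is accurate.

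However, there is a genuine gap at the pivot of your reduction. You claim that uniqueness in the genus reduces to ``$\rL\oplus\rU\simeq\rL'\oplus\rU$ implies $\rL\simeq\rL'$,'' and that this ``follows immediately'' from Proposition~\ref{prop:nikulin1}. It does not: that proposition says $\rL\oplus\rU$ is the unique lattice with its signature and discriminant form, which is precisely how one \emph{produces} the isomorphism $\rL\oplus\rU\simeq\rL'\oplus\rU$ from membership in a common genus; it provides no cancellation of $\rU$. Cancellation is exactly what fails in general and is the substantive content of the theorem. Even for indefinite $\rL$ the genus may contain several classes when the rank hypotheses fail --- the rank-two Picard lattices of Remark~\ref{rema:FMpartners}, governed by genera in class groups of real quadratic fields, are stably isomorphic but pairwise non-isomorphic --- and the paper's examples in Sections~\ref{sect:geni}, \ref{sect:ratq} and \ref{sect:genEnr} all exploit this failure. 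The same problem undoes your surjectivity argument: lifting $\bar\phi\in\mathrm O(q_{\rL})$ to an isometry of $\rL\oplus\rU$ is the easy stabilized statement recorded in the paper right after Proposition~\ref{prop:nikulin2}, but an isometry of $\rL\oplus\rU$ does not restrict to one of $\rL$, and Witt cancellation cannot manufacture one. Nikulin instead applies the spinor-genus machinery directly to $\rL$: the hypotheses guarantee that each $\rL_p$ splits off a unimodular (resp.\ $u/v$-type) block of rank at least $2$, so the local spinor norm groups contain $\bZ_p^{*}(\bQ_p^{*})^{2}$ for every $p$; strong approximation for the spin group then yields simultaneously one class per genus and the surjectivity of $\mathrm O(\rL)\to\mathrm O(q_{\rL})$, with no stabilization by $\rU$ anywhere in the argument.
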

\begin{rema} \cite[Rem.~1.14.5]{nik-lattice} \label{rema:niksmall}
The $2$-adic condition can be achieved whenever the discriminant
group $d(\rL)$ has $(\bZ/2\bZ)^3$ as a summand.
\end{rema}

Thus given a lattice $\mathrm L$, any automorphism of $(d(\mathrm L),q_{\mathrm L})$ may be achieved
via an automorphism of $\mathrm L\oplus \mathrm U$. More precisely, given two lattices
$\mathrm L_1$ and $\mathrm L_2$ of the same rank and signature and an isomorphism
$$\varrho: (d(\mathrm L_1),q_{\mathrm L_1}) \stackrel{\sim}{\longrightarrow} (d(\mathrm L_2),q_{\mathrm L_2})$$
there exists an isomorphism
$$\rho: \mathrm L_1\oplus \rU \stackrel{\sim}{\longrightarrow} 
\mathrm L_2 \oplus \rU
$$
inducing $\varrho$.

\subsection*{Nikulin imbedding result:}

\begin{prop} \cite[Cor.~1.12.3,Thm.~1.14.4]{nik-lattice} \label{prop:nikulin3}
Let $\rL$ be an even lattice of signature $(t_+,t_-)$ and discriminant
group $d(\rL)$. Then $\rL$ admits a primitive embedding into a unimodular even lattice of signature $(\ell_+,\ell_-)$ if
\begin{itemize}
\item{$\ell_+ - \ell_- \equiv 0 \mod 8$;}
\item{$\ell_+\ge t_+$ and $\ell_- \ge t_-$;}
\item{$\ell_+ + \ell_- - t_+ - t_- > \ell(d(\rL))$,
the rank of $d(\rL)$.}
\end{itemize}
This embedding is unique up to automorphisms if
\begin{itemize}
\item{$\ell_+> t_+$ and $\ell_- > t_-$;}
\item{$\ell_+ + \ell_- - t_+ - t_- \ge 2+ \ell(d(\rL))$.}
\end{itemize}
\end{prop}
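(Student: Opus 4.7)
The plan is to reduce the embedding question to a gluing construction plus an application of the preceding results (Propositions~\ref{prop:nikulin1} and~\ref{prop:nikulin2}). A primitive embedding $\rL\hookrightarrow M$ into an even unimodular lattice is equivalent to choosing the orthogonal complement $\rK:=\rL^{\perp}\subset M$ together with a gluing: namely, since $M$ is unimodular, the natural map $M/(\rL\oplus \rK)\hookrightarrow d(\rL)\oplus d(\rK)$ is the graph of an anti-isometry $\varphi:(d(\rL),q_{\rL})\stackrel{\sim}{\to}(d(\rK),-q_{\rK})$. Conversely, any triple $(\rL,\rK,\varphi)$ of this type determines a unique even unimodular overlattice $M$ of $\rL\oplus \rK$ with $\rL$ primitively embedded. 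So existence of the embedding is equivalent to existence of an even lattice $\rK$ of signature $(\ell_+-t_+,\ell_--t_-)$ whose discriminant form equals $-q_{\rL}$, and uniqueness of the embedding up to isometries of $M$ amounts to uniqueness of $\rK$ together with the freedom to absorb different choices of $\varphi$ into isometries of $\rK$.

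For existence, I first check signature compatibility: the conditions $\ell_+\ge t_+$, $\ell_-\ge t_-$ make the prospective signature of $\rK$ nonnegative in each entry, and $\ell_+-\ell_-\equiv 0\pmod 8$ is the necessary congruence for an even unimodular lattice to exist in signature $(\ell_+,\ell_-)$ (equivalently, it is the congruence matching the signature of $-q_{\rL}$ modulo $8$ against $(\ell_+-t_+)-(\ell_--t_-)$ by van der Blij). Then I invoke Nikulin's realization theorem for discriminant forms: an even lattice $\rK$ of prescribed signature $(s_+,s_-)$ and discriminant form $-q_{\rL}$ exists as soon as $s_++s_->\ell(d(\rL))$; this is precisely the third hypothesis $\ell_++\ell_--t_+-t_-> \ell(d(\rL))$. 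Gluing $\rL\oplus \rK$ along the graph of any anti-isometry $\varphi$ (which exists because the prescribed discriminant forms are opposite by construction) produces the desired $M$.

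For uniqueness, given two primitive embeddings $\rL\hookrightarrow M_i$ ($i=1,2$), the orthogonal complements $\rK_i$ share the same rank, signature, and discriminant form, and the strict inequalities $\ell_\pm>t_\pm$ guarantee that each $\rK_i$ is indefinite. The strengthened rank bound $\ell_++\ell_--t_+-t_-\ge 2+\ell(d(\rL))$ is exactly the numerical input of Proposition~\ref{prop:nikulin2}, delivering both uniqueness of the genus (so $\rK_1\cong \rK_2$) and surjectivity of $\mathrm O(\rK)\to \mathrm O(q_{\rK})$ (so any two gluing anti-isometries $\varphi_1,\varphi_2$ differ by an automorphism of $d(\rK)$ that lifts to $\rK$). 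Combining these isomorphisms yields an isometry $M_1\to M_2$ restricting to the identity on $\rL$.

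The genuine technical hurdle is the $2$-adic subtlety in Proposition~\ref{prop:nikulin2}: if $\operatorname{rank}(\rK)=\ell(d(\rK_2))$, one needs the Kneser-type summand $u^{(2)}_+(2)$ or $v^{(2)}_+(2)$ in $q_{\rK_2}$. The strict rank bound $\ge 2+\ell(d(\rL))$ in the uniqueness hypothesis is engineered so that one always has $\operatorname{rank}(\rK)\ge \ell(d(\rK_2))+2$, sidestepping this edge case; this is the step that most delicately constrains the constant in the inequality and is where I would be most careful in writing out the proof.
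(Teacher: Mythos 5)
The paper offers no proof of this proposition --- it is quoted verbatim from Nikulin [Cor.~1.12.3, Thm.~1.14.4] --- and your argument is a faithful reconstruction of Nikulin's own: primitive embeddings into an even unimodular $M$ correspond to complements $\mathrm K$ of signature $(\ell_+-t_+,\ell_--t_-)$ with discriminant form $-q_{\rL}$ glued along an anti-isometry, existence then follows from the realization theorem for discriminant forms once the strict inequality $\ell_++\ell_--t_+-t_->\ell(d(\rL))$ kills all the local conditions, and uniqueness follows from one class in the genus of $\mathrm K$ together with surjectivity of $\mathrm O(\mathrm K)\to\mathrm O(q_{\mathrm K})$, i.e.\ Proposition~\ref{prop:nikulin2} applied to the indefinite complement. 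Your closing observation --- that the constant $2$ in the uniqueness bound is exactly what makes the $2$-adic edge case of Proposition~\ref{prop:nikulin2} vacuous via $\ell(d(\mathrm K_2))\le\ell(d(\rL))$ --- is the right place to be careful, and it checks out; I see no gaps.
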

In particular, {\em any} even nondegenerate lattice of signature $(1,9)$ admits a unique embedding into the 
K3 lattice $\rU^{\oplus 3} \oplus \rE_8(-1)^{\oplus 2}.$

\section{Mukai lattices and derived automorphisms}
\label{sect:genK3}

Throughout, we work over the complex numbers $\bC$. Let $X$ be a projective K3 surface and 
$$
\Pic(X)\subset \rH^2(X,\bZ) \simeq \mathrm{E}_8(-1)^{\oplus 2} \oplus \mathrm U^3
$$
its Picard lattice, a sublattice of a lattice of signature $(3,19)$, with respect to the intersection pairing. The Picard lattice governs the automorphisms of $X$. The composition
$$\varpi: \Aut(X) \rightarrow \mathrm{O}(\rH^2(X,\bZ))\rightarrow \mathrm{O}(\Pic(X))$$
has finite cyclic kernel \cite{Nik-finite,kondo}. 
The image can be computed explicitly, at least up to finite subgroups,
in terms of $\Pic(X)$ \cite[\S 2]{LP}. Consider the subgroup generated
by reflections in $(-2)$-classes, i.e., indecomposable effective divisors of self-intersection $-2$; it acts naturally on the positive cone in $\Pic(X)_{\bR}$. Then
the image of $\varpi$ is a finite-index subgroup of those elements 
leaving invariant a fundamental domain for this action, i.e. the ample cone. 
All possible finite $G\subset \Aut(X)$ have been classified, see \cite{BH}. Classification of $\Aut(X)$-conjugacy classes of elements or subgroups boils down to lattice theory of $\Pic(X)$;  we will revisit it in special cases below.

The {\em transcendental lattice} of $X$ is the orthogonal complement
$$
T(X):=\Pic(X)^\perp \subset \rH^2(X,\bZ). 
$$
This lattice plays a special role: 
two K3 surfaces $X_1$, $X_2$ are {\em derived equivalent} if and only if there exists an isomorphism of lattices
$$
T(X_1)\stackrel{\sim}{\longrightarrow} T(X_2),
$$
compatible with Hodge structures \cite{Orlov}. 
Derived equivalence also means that
the lattices $\Pic(X_1)$ and $\Pic(X_2)$ belong to the same genus. 
Over nonclosed fields, or in equivariant contexts, derived equivalence is a subtle property, see, e.g., \cite{HT-derived}, \cite{HT22}. 

We recall standard examples of Picard lattices of derived equivalent but not isomorphic K3 surfaces 

\begin{rema} \label{rema:FMpartners}
In Picard rank one: the number of nonisomorphic derived equivalent surfaces is governed by the number of prime divisors of the polarization degree $2d$; see \cite[Cor.~2.7]{HLOY} and 
Proposition~\ref{prop:nikulin2}.
The isomorphisms classes correspond
to solutions of the congruence
\begin{equation} \label{squarecond}
x^2 \equiv 1 \pmod{4d} 
\end{equation}
modulo $\pm 1$.
When $d>1$ the number of derived equivalent K3 surfaces
is $2^{\tau(d)-1}$, where $\tau$ is the number of distinct 
prime factors of $d$.  

In Picard rank two: derived equivalences among lattice-polarized K3 surfaces of square-free discriminant are governed by the genera in
the class group of the corresponding real quadratic field
\cite[Sect.~3]{HLOY}. 
\end{rema}
Here are instances where derived equivalence is trivial
\begin{prop} \cite[Cor.~2.6, 2.7]{HLOY}
\label{prop:dereq}
Derived equivalence implies isomorphism in each of the following cases:
\begin{itemize}
\item{if the Picard rank is $\ge 12$;}
\item{if the surface admits an elliptic fibration with a section;}
\item{if the Picard rank is $\ge 3$ and the discriminant group of
the Picard group is cyclic.}
\end{itemize}
\end{prop}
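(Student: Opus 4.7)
The plan is to parlay derived equivalence into an actual isomorphism by combining Orlov's criterion---recalled in the paragraph preceding Remark~\ref{rema:FMpartners}---with the Torelli theorem. Orlov produces a Hodge isometry $\psi : T(X_1)\to T(X_2)$ of transcendental lattices, and simultaneously places $\Pic(X_1)$ and $\Pic(X_2)$ in the same genus. Since $\rH^2(X_i,\bZ)$ is unimodular, there is a canonical anti-isomorphism $(d(T(X_i)),q_{T(X_i)}) \cong (d(\Pic(X_i)),-q_{\Pic(X_i)})$, so $\psi$ already prescribes an isomorphism $\bar\psi : d(\Pic(X_1))\to d(\Pic(X_2))$ of discriminant forms. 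The whole task is to extend $\psi$ to a Hodge isometry of $\rH^2$, after which Torelli, together with a standard Weyl-group adjustment into the K\"ahler chamber, delivers $X_1\cong X_2$.

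For the case $\rho := \rk \Pic \ge 12$ I would work directly with the transcendental lattice $T := T(X_i)$, which has signature $(2,20-\rho)$ and rank $22-\rho\le 10$. Since $\ell(d(T))\le \rk(T)=22-\rho$, the hypothesis $\rho\ge 2+\ell(d(T))$ of the uniqueness clause of Proposition~\ref{prop:nikulin3} is satisfied, and the remaining numerical conditions there are immediate. Hence the primitive embedding $T\hookrightarrow \Lambda := \rU^{\oplus 3}\oplus \rE_8(-1)^{\oplus 2}$ is unique up to $\mathrm{O}(\Lambda)$, so the two embeddings $T(X_i)\hookrightarrow \Lambda$ differ by some $\Psi\in \mathrm{O}(\Lambda)$. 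Identifying $\rH^2(X_i,\bZ)\cong \Lambda$, the isometry $\Psi$ restricts to $\psi$ on $T(X_1)$ and is automatically Hodge, since the orthogonal complement $\Pic(X_i)\otimes \bC$ is of pure type $(1,1)$.

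For the elliptic and cyclic-discriminant cases I would first upgrade $\Pic(X_1)$ and $\Pic(X_2)$ to isomorphic lattices via Nikulin's Witt cancellation and stabilization. If $X_1$ admits an elliptic fibration with section, the fiber and section span a copy of $\rU$, which splits off because $\rU$ is unimodular, so $\Pic(X_1)=L\oplus \rU$ for some $L$; Proposition~\ref{prop:nikulin1} then yields $\Pic(X_2)\cong L\oplus \rU=\Pic(X_1)$. The same decomposition also forces $\rk \Pic \ge \ell(d(\Pic_p))+2$ for every $p$, with strict inequality at $p=2$ that makes the $2$-adic clause of Proposition~\ref{prop:nikulin2} vacuous. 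In the third case the cyclicity of $d(\Pic)$ bounds $\ell(d(\Pic_p))\le 1$ for every $p$, so $\rk\Pic\ge 3$ gives exactly the same numerical inequalities. In both cases Proposition~\ref{prop:nikulin2} delivers genus uniqueness together with the surjectivity $\mathrm{O}(\Pic(X_1))\twoheadrightarrow \mathrm{O}(q_{\Pic(X_1)})$, so the prescribed $\bar\psi$ lifts to an isometry $\phi:\Pic(X_1)\to \Pic(X_2)$; then $\psi\oplus \phi$ respects the gluing by which $\Pic(X_i)\oplus T(X_i)$ sits as a finite-index sublattice of $\rH^2(X_i,\bZ)$, and so extends to a Hodge isometry $\Psi:\rH^2(X_1,\bZ)\to \rH^2(X_2,\bZ)$.

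In each case, composing $\Psi$ with $-1$ if necessary and then with reflections by $(-2)$-classes in $\Pic(X_2)$ produces an effective Hodge isometry that sends the K\"ahler cone of $X_1$ to that of $X_2$, and the strong Torelli theorem then yields the desired isomorphism $X_1\cong X_2$. The hardest step is matching the gluing data across $T$ and $\Pic$: this is precisely what the surjectivity onto $\mathrm{O}(q_{\Pic})$ in Proposition~\ref{prop:nikulin2}---or, in the first case, the uniqueness of primitive embedding in Proposition~\ref{prop:nikulin3}---encapsulates. The final K\"ahler-cone bookkeeping is a routine application of the simply transitive action of the Weyl group of $(-2)$-roots on chambers of the positive cone.
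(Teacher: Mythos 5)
Your argument is correct, and all the numerical verifications check out: for $\rho\ge 12$ the uniqueness clause of Proposition~\ref{prop:nikulin3} applies to $T$ because $\rho\ge 2+\ell(d(T))$ follows from $\ell(d(T))\le 22-\rho$; an elliptic fibration with section does split off a unimodular $\rU$ so that Proposition~\ref{prop:nikulin1} identifies the Picard lattices and Proposition~\ref{prop:nikulin2} applies with the $2$-adic clause vacuous; and cyclicity of $d(\Pic)$ with $\rho\ge 3$ gives the same inequalities. The gluing step (lifting $\bar\psi$ to $\phi$ via surjectivity of $\mathrm{O}(\Pic)\to\mathrm{O}(q_{\Pic})$ so that $\psi\oplus\phi$ descends from $\Pic\oplus T$ to the unimodular overlattice $\rH^2$) and the final Weyl-group/$\pm 1$ adjustment before invoking Torelli are both handled properly. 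Note that you were right to use the embedding-uniqueness result rather than Proposition~\ref{prop:nikulin2} in the first case, since $T$ can be definite when $\rho=20$.

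The paper itself offers no proof: it simply cites \cite[Cor.~2.6, 2.7]{HLOY}. There, these statements are extracted from the counting formula for Fourier--Mukai partners, which expresses the number of partners in terms of the classes in the genus of $\NS(X)$ and the index of the image of the Hodge isometries of $T(X)$ in $\mathrm{O}(q_{T(X)})$; the three bullet points are exactly the situations where that count collapses to one. Your reconstruction is the direct lattice-theoretic argument underlying that formula, carried out with the Nikulin results already assembled in Section~\ref{sect:lattices}; it proves slightly more in passing (an explicit Hodge isometry of $\rH^2$ extending a prescribed one on $T$), whereas the counting formula packages the same two inputs --- one class in the genus and surjectivity onto the discriminant orthogonal group --- into a uniform numerical statement.
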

We give a further example in Proposition~\ref{prop:nik-der}.

Let 
$$
\widetilde{\rH}(X) :=\rH^0(X,\bZ)(-1)\oplus \rH^2(X,\bZ)\oplus \rH^4(X,\bZ)(1)
$$
be its {\em Mukai lattice}, a lattice of signature $(4,20)$, with respect to the Mukai pairing. There is a surjective homomorphism 
\cite[Cor.~3]{HMS}
$$
\Aut D^b(X) \to \mathrm O^+(\widetilde{\rH}(X)) \subset 
\mathrm O(\widetilde{\rH}(X))
$$
onto the group of {\em signed} Hodge isometries, a subgroup of 
the orthogonal group of the Mukai lattice preserving orientations
on the positive $4$-planes.

We retain the notation from \cite[Cor.~3]{HT22}, where we discussed the notion and basic properties of equivariant derived equivalences between K3 surfaces. We recall: 

\

\begin{quote}
Let $X_1$ and $X_2$ be K3 surfaces equipped with a generically free action of a finite cyclic group $G$. Then $X_1$ and $X_2$ are $G$-equivariantly derived equivalent if and only if there exists a $G$-equivariant isomorphism of their Mukai lattices 
$$
\widetilde\rH(X_1)  \stackrel{\sim}{\longrightarrow} \widetilde{\rH}(X_2)
$$
respecting the Hodge structures. 
\end{quote}
Note that the $G$-action is necessarily trivial on 
$$
\rH^0(X,\bZ)(-1)\oplus \rH^4(X,\bZ)(1).
$$

Even in the event of an isomorphism $X_1\simeq X_2$, 
equivariant derived equivalences are interesting: indeed, there are actions of finite groups $G$ that are not conjugate in $\Aut(X)$ but are conjugate via $\Aut D^b(X)$ as the action of the latter group is visibly larger.  See Proposition~\ref{prop:nik-derived-ten} for examples.

Let $G$ be a finite group and $X_1$ and $X_2$ K3 surfaces with $G$-actions.
For simplicity, assume that $G$ acts on $T(X_i)$ via $\pm \mathrm I$. (This is the case if the transcendental cohomology is simple.)  Given a $G$-equivariant isomorphism $T(X_1) \simeq T(X_2)$, can we lift to a $G$-equivariant isomorphism of Mukai lattices
$$
\widetilde{\rH}(X_1,\bZ) \simeq \widetilde{\rH}(X_2,\bZ),
$$
where $G$ acts trivially on the hyperbolic summand
$$
	\rU = \rH^0 \oplus \rH^4?
$$

Clearly the answer is NO. Suppose that $G=C_2=\left< \epsilon \right>$ and the $\epsilon=-1$ eigenspaces are stably isomorphic but not isomorphic. Adding $\rU$ does nothing to achieve the desired stabilization. 
In other words, $\rU$ is ``too small''. We need to add summands where $G$ acts {\em nontrivially} to achieve stabilization across all the various isotypic components.
See Proposition~\ref{prop:bootstrap} for more on this
question.

\section{Generalities concerning involutions on K3 surfaces}
\label{sect:geni}

Let $i:X \ra X$ be an involution on a complex projective
K3 surface, which acts faithfully on $\rH^2(X,\bZ)$ by the Torelli
Theorem. It is {\em symplectic} (resp.~{\em anti-symplectic}) if
$$
i^*\omega = \omega  \quad (\text{resp. } -\omega),
$$
where $\omega$ is a holomorphic two-form.  
Nikulin \cite{Nik-finite} showed that any symplectic involution
fixes eight isolated points and that all such involutions
are topologically conjugate; these are the {\em Nikulin involutions}
studied in Section~\ref{sect:nik}. An involution without
fixed points was classically known to be an {\em Enriques involution}
arising from a double cover $X \ra S$ of an Enriques surface.

The case of anti-symplectic involutions with fixed points is more complicated. Nikulin enumerated 74 cases beyond the Enriques case; see \cite{AN,BH,AE,alexeev} for details of the various cases.

Given an anti-symplectic involution $i:X\rightarrow X$ on a K3 surface, we recall the Nikulin invariants  $(r,a,\delta)$ \cite[\S 2]{AE}:
Let $r$ denote the rank of the lattice 
$$
S=\rH^2(X,\bZ)^{i=1},
$$ 
which is indefinite if $r>1$. We are using the fact that
transcendental classes of $X$ are anti-invariant under $i$, as
the quotient $X/i$ admits no holomorphic two-form.  
We write
$$T=\rH^2(X,\bZ)^{i=-1}=S^{\perp}$$ 
for the complementary lattice with signature $(2,20-r)$, 
which is indefinite if $r<20$.
The discriminant group $d(S) \simeq d(T)$ is a $2$-elementary group;
its rank is denoted by $a$. This group comes with a quadratic form $$q_S:d(S) \rightarrow \bQ/2\bZ.$$  
The {\em coparity} $\delta$ equals $0$ if $q_S(x) \in \bZ$ for each 
$x \in d(S)$ and equals $1$ otherwise.  

We relate this to geometric invariants. For an anti-symplectic involution, there are no isolated fixed points so the fixed locus $R=X^i$ is of pure dimension one or empty. Suppose
there are $k+1$ irreducible components, with genera summing to $g$. 
Then we have cf. \cite[p.5]{AE}
$$g = 11 - (r+a)/2 \quad k = (r-a)/2,$$
excluding the Enriques case $(r,k,\delta) = (10,10,0)$.  

Nikulin classifies even indefinite $2$-elementary lattices $\rL$. They
are determined uniquely by $(r,a,\delta)$ and $\mathrm O(\rL) \rightarrow \Aut(d(\rL))$ is surjective.  
In the definite case, {\em a priori} there are multiple classes in 
each genus but this is not relevant for our applications. Indeed, the possibilities include
\begin{itemize}
\item
$r=a=1$: $X$ is a double cover of $\bP^2$ branched along a sextic
plane curve.
\item
The case where $T$ is definite $(r=20, a=2, g=0, k=9)$, we have
$d(T)=\bZ/2\bZ \oplus \bZ/2\bZ$ thus is equal to 
$$
	\left( \begin{matrix} 2 & 0 \\ 0 &  2 \end{matrix} \right).
$$
Even in this case, automorphisms of the discriminant group are realized by automorphisms of the lattice. 
\end{itemize}
\begin{theo}[Alexeev-Nikulin] For each admissible set of invariants $(r,a,\delta)$, there is a unique
orthogonal pair of lattices (S,T) embedded in the K3 lattice $\Lambda$, up to automorphisms of $\Lambda$. 
There are 75 such cases.
\end{theo}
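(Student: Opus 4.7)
The plan is to reduce the statement to two facts: (i) for each admissible triple $(r,a,\delta)$, there is a unique isomorphism class of even $2$-elementary lattice $S$ of signature $(1,r-1)$ with discriminant invariants $(a,\delta)$; and (ii) this $S$ admits a primitive embedding $S\hookrightarrow \Lambda$ which is unique up to the action of $\mathrm O(\Lambda)$. Granting these, the orthogonal pair is $(S,T):=(S,S^\perp)$, so uniqueness of the pair in $\Lambda$ follows. Fact (i) is the content of Nikulin's classification of even $2$-elementary indefinite lattices (as recalled just before the theorem statement), which also supplies the surjectivity $\mathrm O(S)\twoheadrightarrow \mathrm O(q_S)$.

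For fact (ii) I would apply the Nikulin embedding result (Proposition~\ref{prop:nikulin3}) with $(t_+,t_-)=(1,r-1)$ and $(\ell_+,\ell_-)=(3,19)$. The existence conditions reduce to $a\le r$ and $r+a\le 22$, both of which are among the admissibility constraints; the uniqueness conditions give $r<20$ and $r+a\le 20$. For the boundary cases $r=20$ or $r+a\in\{21,22\}$---most notably the cases with $T$ definite---I would argue directly by the gluing principle: given two primitive embeddings of $S$ into $\Lambda$, their orthogonal complements are two lattices of the same rank, signature, and discriminant form $-q_S$; by Nikulin's stabilization (Proposition~\ref{prop:nikulin1} and~\ref{prop:nikulin2}), whichever of $S$, $T$ is indefinite has $\mathrm O\twoheadrightarrow \mathrm O(q)$, and this surjectivity lets us modify the gluing isomorphism $d(S)\simeq d(T)$ by an automorphism of the lattice, producing the required element of $\mathrm O(\Lambda)$ matching the two embeddings.

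The final step is the enumeration. The admissibility constraints are $1\le r\le 20$, $0\le a\le \min(r,22-r)$, $a\equiv r\pmod 2$ (from $\mathrm{sign}(\Lambda)-\mathrm{sign}(S)\in 2\bZ$ together with evenness of $q_T$), plus the $\delta$-compatibility conditions governing when $q_S$ can be taken $\bZ$-valued (these cut out certain congruences of $r-a$ modulo $4$ and $8$, arising from the finite list of indecomposable building blocks $u_+^{(2)}(2)$, $v_+^{(2)}(2)$, $\langle \pm 1/2\rangle$ for the discriminant form). Tabulating these yields exactly Nikulin's $75$ triples, matching the Enriques case $(10,10,0)$ plus the $74$ remaining cases mentioned earlier in the text.

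The main obstacle will be the enumeration in the third paragraph: the admissibility criteria at the prime $2$ interact subtly with $\delta$, and one has to check case-by-case that the tabulated list is complete and non-redundant. A secondary difficulty is the boundary case $r=20$ where $T$ is definite---here Proposition~\ref{prop:nikulin3} does not directly yield uniqueness, so one must invoke the indefiniteness of $S$ together with the classification to run the gluing argument, and verify that the genus of $T$ indeed contains a single class (as asserted in the discussion of the $(r,a,g,k)=(20,2,0,9)$ case preceding the theorem).
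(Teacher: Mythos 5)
The paper does not prove this theorem: it is quoted as a classical result of Alexeev--Nikulin (citing \cite{AN}), with the surrounding text only recalling the ingredients (Nikulin's classification of even indefinite $2$-elementary lattices, the surjectivity of $\mathrm O(\rL)\to\Aut(d(\rL))$ for such lattices, and the one relevant definite complement $\operatorname{diag}(2,2)$ at $r=20$). Your outline is the standard argument behind the citation and is structurally sound: uniqueness of $S$ in its genus, uniqueness of the primitive embedding via complement-plus-gluing, and a case count.

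Two points deserve correction. First, you invoke Propositions~\ref{prop:nikulin1} and \ref{prop:nikulin2} to get $\mathrm O\twoheadrightarrow\mathrm O(q)$ for the indefinite member of the pair, but Proposition~\ref{prop:nikulin2} does not apply to all the lattices occurring here: when $a=r$ (e.g.\ $\left<2\right>\oplus\left<-2\right>$, or more generally whenever the discriminant form has no $u^{(2)}_+(2)$ or $v^{(2)}_+(2)$ summand) its hypotheses fail. The surjectivity you need is Nikulin's separate theorem on $2$-elementary indefinite lattices --- exactly the statement the paper records just before the theorem --- not his general stabilization criterion. Second, your existence step is incomplete at the boundary: Proposition~\ref{prop:nikulin3} as stated yields a primitive embedding only when $22-r>a$, so the admissible triples with $r+a=22$ (e.g.\ $(20,2)$) require either the refined $p$-adic embedding criteria or a direct construction; you flag the boundary only for uniqueness. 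Finally, the count of $75$ is asserted rather than derived --- the congruence conditions tying $\delta$ to $r-a$ must be made precise and the table actually checked --- but for a classification theorem of this kind that is an acknowledged bookkeeping task rather than a conceptual gap.
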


\begin{coro} 
\label{coro:antisyminv}
Any equivariant derived equivalence of K3 surfaces with anti-symplectic involutions induces an equivariant isomorphism between the underlying K3 surfaces.  
\end{coro}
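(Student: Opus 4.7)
The plan is to promote the equivariant derived equivalence to an equivariant Hodge isometry of $\rH^2$ and then invoke equivariant Torelli. By the criterion recalled in Section~\ref{sect:genK3}, an equivariant derived equivalence between $(X_1,i_1)$ and $(X_2,i_2)$ yields a $C_2$-equivariant Hodge isometry
$$
\Phi:\tilde\rH(X_1,\bZ)\stackrel{\sim}{\longrightarrow}\tilde\rH(X_2,\bZ),
$$
with $C_2$ trivial on $\rU=\rH^0\oplus\rH^4$. Decomposing each Mukai lattice into its $\pm 1$-eigenspaces for $i_j^*$, the isometry $\Phi$ splits as
$$
\Phi_+:\rU\oplus S_1\stackrel{\sim}{\longrightarrow}\rU\oplus S_2,\qquad
\Phi_-:T_1\stackrel{\sim}{\longrightarrow} T_2,
$$
and $\Phi_-$ is a Hodge isometry since $\rH^{2,0}\subset T_j\otimes\bC$.

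Proposition~\ref{prop:nikulin1} applied to $\Phi_+$ shows that $S_1$ and $S_2$ share the same signature and discriminant quadratic form, hence the same Nikulin invariants $(r,a,\delta)$. The Alexeev-Nikulin theorem then supplies a lattice isomorphism of orthogonal pairs $(S_1,T_1)\stackrel{\sim}{\to}(S_2,T_2)$ inside the K3 lattice $\Lambda$. To arrange that the restriction to $T_1$ coincides with the given Hodge isometry $\Phi_-$, I compare: the discrepancy is an element of $\mathrm O(T_2)$, and Proposition~\ref{prop:nikulin2} (applicable because $S_2$ and $T_2$ are indefinite $2$-elementary lattices with the $2$-adic conditions satisfied in the relevant cases) lets me lift any such element to an automorphism of the pair $(S_2,T_2)\subset\Lambda$ by choosing a compatible element of $\mathrm O(S_2)$ with the same action on $d(S_2)\simeq d(T_2)$. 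Composing yields an equivariant Hodge isometry
$$
\varphi:\rH^2(X_1,\bZ)\stackrel{\sim}{\longrightarrow}\rH^2(X_2,\bZ).
$$

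The last step is equivariant Torelli. Composing $\varphi$ with a suitable element of the $i_2^*$-invariant subgroup of the Weyl group of $(-2)$-reflections on $\rH^2(X_2,\bZ)$, together with a possible global sign, moves the image of the ample cone of $X_1$ onto the ample cone of $X_2$. The resulting effective equivariant Hodge isometry is induced by a unique equivariant isomorphism $X_1\to X_2$. The main obstacle I anticipate is precisely this last transitivity assertion: the equivariant reflection subgroup, generated by reflections in $(-2)$-classes lying in $S_2\cap\Pic(X_2)$ or $T_2\cap\Pic(X_2)$ and by the products $r_v r_{i_2^*(v)}$ for $(-2)$-classes in general position, must act transitively on the $i_2^*$-invariant chambers of the positive cone. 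This follows from $i_2^*$ normalizing the Weyl group and stabilizing the ample chamber, but invoking it cleanly requires the hyperbolic-reflection-group analysis underlying Alexeev's moduli picture for K3 surfaces with anti-symplectic involution.
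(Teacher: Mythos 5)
Your proposal is correct and follows essentially the same route as the paper: split the equivariant Mukai-lattice isometry into eigenspaces, deduce that the invariant $2$-elementary lattices are stably isomorphic and hence isomorphic (via the Nikulin/Alexeev--Nikulin classification), glue a lattice isomorphism on the invariant part to the given Hodge isometry on the anti-invariant part using surjectivity of $\mathrm{O}(S)\to\mathrm{O}(q_S)$, and conclude by Torelli. The paper is in fact terser than you are at the final step, asserting the equivariant Torelli conclusion without the chamber-transitivity discussion you flag.
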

\begin{proof}
Suppose that $(X_1,i_1)$ and $(X_2,i_2)$ are derived equivalent,
compatibly with their anti-symplectic involutions.

Indeed, derived equivalence shows that the invariant 
(resp. anti-invariant)
sublattices of the Picard group are stably equivalent 
(resp. equivalent): 
$$\Pic(X_1)^{i_1=1} \oplus \rU \simeq \Pic(X_2)^{i_2=1}\oplus \rU, \quad
\Pic(X_1)^{i_1=-1} \simeq \Pic(X_2)^{i_2=-1}.$$
Since the
possibilities for the invariant sublattices
are characterized by their $2$-adic invariants, we have
$$\Pic(X_1)^{i_1=1} \simeq \Pic(X_2)^{i_2=1}.$$
We have already observed that all the possible isomorphisms between their discriminants
$$\left(d(\Pic(X_1)^{i_1=1}),q_1\right) \simeq
\left(d(\Pic(X_2)^{i_2=1}),q_2\right)
$$
are realized by isomorphisms of the lattices. In particular,
there exists a choice compatible with the isomorphism
$$\rH^2(X_1,\bZ)^{i_1=-1} \stackrel{\sim}{\ra} \rH^2(X_2,\bZ)^{i_2=-1}$$
induced by the derived equivalence. Thus we obtain
isomorphisms on middle cohomology, compatible with the involutions. 
The Torelli Theorem gives an isomorphism $X_1 \stackrel{\sim}{\ra} X_2$ 
respecting the involutions.
\end{proof}

\begin{coro}
Let $(X_1,\sigma_1)$ and $(X_2,\sigma_2)$ denote K3 surfaces 
with involutions that are $C_2$-equivariantly derived equivalent.
If $X_1/\sigma_1$ is rational then $X_2/\sigma_2$ is rational as well. 
\end{coro}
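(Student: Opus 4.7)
The plan is to reduce the corollary to the previous Corollary~\ref{coro:antisyminv} by showing that both involutions are forced to be anti-symplectic, after which the equivariant isomorphism furnished by that corollary makes the conclusion immediate.

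First, I would argue that $\sigma_1$ must be anti-symplectic. By the dichotomy recalled in Section~\ref{sect:geni}, $\sigma_1$ is either symplectic (a Nikulin involution) or anti-symplectic. If $\sigma_1$ were symplectic, it would fix exactly eight isolated points, so $X_1/\sigma_1$ would have eight $A_1$ singularities and its minimal resolution would again be a K3 surface; in particular $X_1/\sigma_1$ would have Kodaira dimension $0$ and could not be rational. Hence $\sigma_1^*$ acts as $-1$ on $\rH^{2,0}(X_1)$.

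Second, I would transfer anti-symplecticity across the equivalence. As recalled in Section~\ref{sect:genK3}, a $C_2$-equivariant derived equivalence between $(X_1,\sigma_1)$ and $(X_2,\sigma_2)$ is equivalent to the data of a $C_2$-equivariant Hodge isomorphism of Mukai lattices
$$
\tilde{\rH}(X_1,\bZ) \stackrel{\sim}{\longrightarrow} \tilde{\rH}(X_2,\bZ).
$$
Restricting to the one-dimensional $(2,0)$-components $\rH^{2,0}(X_i) \subset \tilde{\rH}(X_i,\bZ)\otimes\bC$ yields an equivariant isomorphism of complex lines, so $\sigma_2^*$ likewise acts by $-1$ on $\rH^{2,0}(X_2)$ and $\sigma_2$ is anti-symplectic. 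Corollary~\ref{coro:antisyminv} then produces an equivariant isomorphism $(X_1,\sigma_1)\simeq (X_2,\sigma_2)$, whence $X_2/\sigma_2 \simeq X_1/\sigma_1$, and the right-hand side is rational by hypothesis.

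The hardest part is the first step — the classical but essential observation that a symplectic quotient of a K3 surface resolves to a K3 and hence fails to be rational. Once the symplectic case is excluded, the remainder is a direct unpacking of the $C_2$-equivariant derived equivalence formalism of Section~\ref{sect:genK3} and an invocation of the preceding corollary; no substantive obstacle remains.
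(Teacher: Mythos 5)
Your proposal is correct and follows essentially the same route as the paper, which disposes of the corollary with the single remark that rationality of the quotient forces the involution to be anti-symplectic and then invokes Corollary~\ref{coro:antisyminv}. Your elaboration --- ruling out the symplectic case via the K3 resolution of the quotient, transferring anti-symplecticity through the equivariant Hodge isometry on the $(2,0)$-part, and then applying the preceding corollary to identify the quotients --- is exactly the intended argument, just spelled out.
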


Indeed, the rationality of the quotient forces the involution to be
anti-symplectic.  

\begin{exam}
Having an anti-symmetric involution 
is {\em not} generally a derived property. 
For example, consider Picard lattices
$$A_1 = \left( \begin{matrix} 2 & 13 \\ 13 & 12 \end{matrix} \right)
\quad
A_2 = \left(\begin{matrix} 8 & 15 \\ 15 & 10 \end{matrix} \right).
$$
These forms are stably equivalent but not isomorphic.
As in Remark~\ref{rema:FMpartners} -- see \cite[Sec.~2.3]{HT-derived}
for details -- choose derived equivalent 
K3 surfaces
$X_1$ and $X_2$ with $\Pic(X_1)=A_1$ and $\Pic(X_2)=A_2$.  
Note that $A_2$ does not represent two and admits no involution
acting via $\pm 1$ on $d(A_2)$; thus $X_2$ does not admit
an involution.  
\end{exam}
This should be compared with Proposition~\ref{prop:enr-derived}:
Having an Enriques involution is a derived invariant.

\

We collect some lattice-theoretic observations that
will serve as a foundation for Section~\ref{sect:skew}:

\begin{prop} \label{prop:bootstrap}
Let $(X_1,i_1)$ and $(X_2,i_2)$ be K3 surfaces
with involutions, both symplectic or anti-symplectic. Extend the involutions to actions
on the Mukai lattices 
$$
\widetilde{i_j}: \widetilde{\rH}(X_j,\bZ), \quad j=1,2,
$$
where 
$$\widetilde{i_j}|\rH^{k} = \begin{cases}
i_j^* & \text{ if } k=2, \\
-\mathrm{I}  & \text{ if } k=0,4.
\end{cases}
$$
An equivalence of such actions, on Hodge structures
of weight two, corresponds to a triple
\begin{enumerate}
\item{an isomorphism of Hodge structures $$
t:T(X_1)\rightarrow T(X_2),
$$
}
\item{an isomorphism of lattices
$$\pi^{+1}:\Pic(X_1)^{i_1=1} \rightarrow \Pic(X_2)^{i_2=1},
$$}
\item{a stable equivalence of lattices 
$$\pi^{-1}:\Pic(X_1)^{i_1=-1} \oplus \rU \rightarrow
\Pic(X_2)^{i_2=-1} \oplus \rU,
$$}
\end{enumerate}
satisfying the following conditions
\begin{itemize}
\item{the isomorphisms induced by $\pi^{\pm 1}$
on discriminant groups agree on the images
$$\Pic(X_j) \rightarrow 
d(\Pic(X_j)^{i_j=1})\oplus d(\Pic(X_j)^{i_j=-1}),$$
which are $2$-elementary groups;}
\item{the resulting isomorphism
$$\Pic(X_1) \rightarrow \Pic(X_2)$$
is compatible with $t$ on discriminant groups.
}
\end{itemize}
\end{prop}
This is proven through two applications of Nikulin's lattice extension theory, first to the Picard group and then to the full cohomology lattice.   

Fixing $T(X_j)$ and $\Pic(X_j)^{i_j=1}$,
the possible equivalences are indexed by isomorphisms
$\pi^{-1}$ restricting to the identity on the
distinguished $2$-elementary subgroups. 
Applying Nikulin stabilization, the
equivalent Mukai lattices, with these data,
are indexed by the stable isomorphism classes
of the anti-invariant Picard groups, where the
stable isomorphism restricts to the identity
on the distinguished $2$-elementary subgroup of the
their discriminant groups.  
\begin{coro} \label{coro:counting}
Suppose the anti-invariant Picard lattice $P$
is unique in its genus. Then the possible Mukai lattices
$(\widetilde{\rH},\widetilde{i})$ with $P$ 
are indexed by automorphisms of $d(P)$ restricting
to the identity on the two-elementary subgroup.
\end{coro}
This is an equivariant version of the counting
results of \cite{HLOY}.

\section{Cohomological Fourier-Mukai transforms}
\label{sect:cfm}

Let $X_1$ and $X_2$ be smooth projective complex K3 surfaces. A fundamental
result of Orlov \cite{Orlov} shows that any equivalence 
$$\Phi: D^b(X_1) \rightarrow D^b(X_2)$$
arises from a kernel $\cK \in D^b(X_1 \times X_2)$ through a Fourier-Mukai
transform
$$
\begin{array}{rcl}
\Phi_{\cK}: D^b(X_1) & \ra & D^b(X_2) \\
\cE & \mapsto & {\pi_2}_* (\pi_1^* \cE \otimes \cK).
\end{array}
$$
All the indicated functors are taken in their derived senses. 
Given such a kernel, there is also a Fourier-Mukai transform in the 
opposite direction
$$
\begin{array}{rcl}
\Psi_{\cK}: D^b(X_2) & \ra & D^b(X_1) \\
\cE & \mapsto & {\pi_1}_* (\pi_2^* \cE \otimes \cK).
\end{array}
$$

Mukai has computed the kernel of the inverse
$$
\Phi_{\cK}^{-1}=\Psi_{\cK^{\vee}[2]}
$$
i.e., a shift of the dual to our original kernel. 
See \cite[4.10]{MukaiTata}, \cite[\S~4.3]{BBHR}, and \cite[p.~133]{HuFMTAG}
for details. The computation relies on Grothendieck-Serre Duality,
so the appearance of the dualizing complex is natural.
This machinery \cite[\S~3.4]{HuFMTAG} also allows us to analyze how Fourier-Mukai transforms interact with taking duals:
\begin{align*}
\Phi_{\cK} (\cE^{\vee}) &= {\pi_2}_*(\cK \otimes \pi_1^*(\cE^{\vee})) \\
		&= (( {\pi_2}_*(\cK^{\vee} \otimes \pi_1^*\cE) )^{\vee})[-2]  \\
		&=  ((\Phi_{\cK^{\vee}}\cE)[2])^{\vee} \\
		&=  (\Phi_{\cK^{\vee}[2] } \cE)^{\vee}
\end{align*}

Suppose that $X_1$ and $X_2$ are equivalent through an isomorphism
$$
X_2=M_H(X_1,v_1),
$$
i.e., the moduli space of sheaves $\cE_p,p\in X_2$, on $X_1$ with Mukai vector
$$
v_1=v(\cE_p)=(r,D,s) \in \widetilde{\rH}(X,\bZ),
$$
Gieseker-stable with respect to some polarization $H$
on $X_1$. 
Here $r$ is the rank of $\cE_p$, $D=c_1(\cE_p)$, and $s=\chi(\cE_p)-r$.
We assume there exists another Hodge class $v' \in \widetilde{\rH}(X_1,\bZ)$
such that $\left<v,v'\right>=1$; in particular, $v$ is primitive. (For information on how to realize
derived equivalences via such moduli spaces, see
\cite[\S 4,5]{MukaiTata} and \cite[p.~385]{HuyJAG}, the discussion
following Proposition 4.1.)
Let $\cE \rightarrow X_1\times X_2$ denote a universal sheaf;
by simplicity of the sheaves, $\cE$ is unique up to 
tensoring by a line bundle from $X_2$. We may use $\cE$ as a kernel
inducing a derived equivalence between $X_1$ and $X_2$ 
\cite[10.25]{HuFMTAG}.  
Our formulas for inverses are compatible 
with tensoring the kernel by line bundles from one of the factors.

In searching for Fourier-Mukai kernels, cohomological Fourier-Mukai
transforms play a crucial role.  Let $\omega_i \in \rH^4(X_i,\bZ)$
denote the point class and set \cite[{\S}1]{MukaiTata}, \cite[p.~128]{HuFMTAG}
$$Z_{\cK} := \pi_1^*(1+\omega_1) \operatorname{ch}(\cK) \pi_2^*(1+\omega_2) \in \rH^*(X_1\times X_2, \bZ),$$
where the middle term is the Chern character.
Then $Z_{\cK}$ induces an integral isomorphism of Hodge structures
$$
\phi_{\cK}: \widetilde{\rH}(X_1,\bZ) \stackrel{\sim}{\longrightarrow}
\widetilde{\rH}(X_2,\bZ)
$$
compatible with Mukai pairings; this is called the {\em cohomological
Fourier-Mukai transform}. For $\cE \in D^b(X_1)$, we have the identity
$$
\phi_{\cK}(v(\cE))=v(\Phi_{\cK}(\cE)).
$$
We use $\psi_{\cK}$ to denote the cohomological transform of $\Psi_{\cK}$. 

Most cohomological Fourier-Mukai transforms are
induced by kernels
\begin{prop}  \cite{Orlov,HMS}
\label{prop:getkernel}
Given an orientation-preserving integral Hodge isometry
$$\phi:\widetilde{\rH}(X_1,\bZ) \rightarrow
\widetilde{\rH}(X_2,\bZ)$$
there exists a derived equivalence
$$\Phi_{\cK}:D^b(X_1) \rightarrow D^b(X_2)$$
such that $\phi$ is the cohomological Fourier-Mukai transform of $\Phi_{\cK}$.  
\end{prop}

Suppose that $(X_1,f_1)$ is a polarized K3 surface of degree $2r_0s$,
where $r_0$ and $s$ are relatively prime positive integers. 
Let $d_0$ be an integer prime to $r_0$ and fix the isotropic Mukai vector
$$v_0=(r_0,d_0f_1,d_0^2s) \in \widetilde{\rH}(X_1,\bZ).$$
Since $r_0$ and $d_0^2s$ are relatively prime, there exists
a Mukai vector $v'=(m,0,n)$ such that $\left<v_0,v'\right>=1$. 
Let $X_2=M_{f_1}(X_1,v_0)$ be the moduli space of torsion-free sheaves with Mukai vector $v_0$, Gieseker-stable with respect to $f_1$ --  also a K3 surface. Choose a universal sheaf
$\cE \rightarrow X_1 \times X_2$. Our goal is to describe
the induced isomorphism 
$$
\phi_{\cE}: \widetilde{\rH}(X_1,\bZ) \stackrel{\sim}{\rightarrow} 
\widetilde{\rH}(X_2,\bZ).
$$
Following \cite[Ch.~8]{HLbook} and \cite[{\S}2]{Yosh}, the polarization on $X_2$ is given by 
$$\det({\pi_2}_*(\cE \otimes \cO_H(s(r_0-2d_0))))^{\vee}, \quad H \in |f_1|,$$
a primitive ample divisor $f_2$ on $X_2$. More generally, we have
an isomorphism of Hodge structures
$$\rH^2(X_2,\bZ) = (v_0^{\vee})^{\perp} / \bZ v_0^{\vee}, $$
where the perpendicular subspace is taken with respect to the
Mukai pairing.  

\begin{prop} \cite{Yosh}
\label{prop:yoshioka}
Let $(X_1,f_1)$ and $(X_2,f_2)$ be K3 surfaces of Picard
rank one with $X_2\simeq M_{f_1}(X_1,v_0)$ as above.
Choose integers $d_1$ and $\ell$ such that $sd_0d_1-r_0\ell=1$
and take $\cK=\cE\otimes \pi_2^*L$ for some 
line bundle $L$ on $X_2$.
With respect to the bases
$$ (1,0,0), (0,f_j,0), (0,0,1) \in \widetilde{\rH}(X_j,\bZ)\cap \widetilde{\rH}^{1,1}(X_j)$$
the matrix of the cohomological Fourier-Mukai transform takes the form
\begin{equation} \label{CFMmatrix}
\phi_{\cK}:=\left( \begin{matrix}
d_0^2 s & 2d_0sr_0 & r_0  \\
d_0\ell & 2d_0d_1s-1 & d_1 \\
\ell^2r_0 & 2d_1s\ell r_0 & d_1^2s 
\end{matrix}
\right).
\end{equation}
\end{prop}
The inverse is obtained reversing the sign of the middle basis vector and interchanging the
role of $d_0$ and $d_1$:
$$
\left( \begin{matrix}
d_0^2 s & 2d_0sr_0 & r_0  \\
d_0\ell & 2d_0d_1s-1 & d_1 \\
\ell^2r_0 & 2d_1s\ell r_0 & d_1^2 s 
\end{matrix}
\right)
\left( \begin{matrix}
d_1^2 s & -2d_1sr_0 & r_0  \\
-d_1\ell & 2d_0d_1s-1 & -d_0 \\
\ell^2r_0 & -2d_0s\ell r_0 & d_0^2 s
\end{matrix}
\right) = \mathrm I.$$
The formula
$$
\phi_{\cK} \psi_{\cK^{\vee}}=\mathrm I
$$
is the cohomological realization of the identity
$$\Phi_{\cK} \Psi_{\cK^{\vee}[2]}=\mathrm I.$$ 
The third column of $\phi_{\cK}^{-1}$ is the Mukai vector $v_0^{\vee}$,
as
$$
\Phi_{\cK}^{-1}(\cO_p) = \cE_p^{\vee}, \quad 
p=[\cE_p]\in X_2=M_{f_1}(X_1,v).$$ 

\begin{rema} \label{rema:fromrankone}
The assumption in 
Proposition~\ref{prop:yoshioka} on the rank of the Picard groups is not too restrictive, as 
Proposition~\ref{prop:getkernel} allows us to specialize from
the rank-one case. Derived equivalences satisfying (\ref{CFMmatrix}) exist provided the primitive cohomology
groups are isomorphic
$$\rH^2(X_1,\bZ) \supset f_1^{\perp} \simeq f_2^{\perp} \subset
\rH^2(X_2,\bZ)$$
as integral Hodge structures. However, these are not
given as kernels associated with explicit moduli spaces
of sheaves Gieseker-stable with respect to some polarization.
\end{rema}

\begin{exam}
\label{exam:12}
Suppose that $(X_1,f_1)$ is a degree $12$ K3 surface. Consider
the isotropic Mukai vector $v=(2,f_1,3)$ so that
$$X_2:=M_{f_1}(X_1,v)$$
is also a K3 surface derived equivalent to $X_1$.  
Taking
$$
r_0=2, \quad s=3,\quad  d_0=1,\quad d_1=\ell=1,
$$
we obtain
\begin{align*}
(1,0,0) &\mapsto (3,f_2,2) \\
(0,f_1,0) &\mapsto (12,5f_2,12)\\
(0,0,1) & \mapsto (2,f_2,3) 
\end{align*}
with matrix
\begin{equation}
\varphi := \left( \begin{matrix}  3 & 12 & 2\\
						 1 &   5 & 1\\
						 2 & 12 & 3 
			\end{matrix} \right). 
\label{phidegree12}
\end{equation}
The determinant is $1$ with one eigenvector $(1,0,-1)$ with eigenvalue $1$; thus this
is orientation preserving.  
Note that
$$(2,-f_1,3) \mapsto (0,0,1)$$
whence
$$X_1=M_{f_2}(X_2,(2,f_2,3)), \quad X_2=M_{f_2}(X_1,(2,-f_1,3)).$$
The fact that $(1,0,-1)$ has eigenvalue $1$ gives
$$X_1^{[2]} \stackrel{\sim}{\dashrightarrow} X_2^{[2]}.$$

\end{exam}

\section{Locally-free kernels and wall-crossing}
\label{sect:crossing}

For applications to skew equivalence, discussed in Section~\ref{sect:skew}, we require derived equivalences
between K3 surfaces $X_1$ and $X_2$ induced by
locally-free kernels 
$$\cE \rightarrow X_1 \times X_2.$$
Many equivalences do not arise in this way.

\begin{exam}
Suppose that $X_1=X_2=X$ and consider the equivalence
arising by interpreting $X$ as the moduli space of 
ideal sheaves $I_x, x\in X$.  These sheaves are not
locally-free.    
\end{exam}

We refer the reader to \cite[\S 1.2]{HLbook} for 
the definitions and background on $\mu$-stable
sheaves and relations to Gieseker stability.
We use the implications \cite[Lemma~1.2.13]{HL}
$$\mu_H\text{-stable} \Rightarrow \text{stable wrt $H$} \Rightarrow
\text{semistable wrt $H$} \Rightarrow \mu_H
\text{-semistable}.$$

Now $\mu$-stable sheaves on K3 surfaces are
typically locally-free: Let $E$ be a simple sheaf
on a K3 surface $X$, with $v(E)$ isotropic, 
such that $\mu_H$-stable for some
polarization $H$. Then $E$ is locally-free, with the
exception of ideal sheaves $I_x$ \cite[3.10]{MukaiTata},
\cite[6.1.9]{HL}.
The problem is that moduli spaces $M_H^{\mu s}(X,v)$
of such sheaves are not always compact, when there
are strictly $\mu_H$-semistable sheaves.

We recall criteria guaranteeing that $\mu$ stability
and semistability coincide. Let $v=(r,D,s)$ be a primitive 
isotropic Mukai vector of rank $r>0$ for a K3 surface
$X$. Assume that $D$ is primitive and $H$ is a polarization
avoiding ``walls'', i.e., hyperplanes expressible in 
the form $\xi^{\perp}$ for suitable 
$0\neq \xi \in D^{\perp}$. Then we have, by 
\cite[4.C.3]{HLbook},
$$M^{\mu s}_H(X,v) = M^{\mu ss}_H(X,v).$$

The $\xi$ that arise may be characterized in terms of $r$ \cite[4.C.2]{HLbook}:
\begin{exam}
Suppose that $r=2$ and $E$ is strictly $\mu_H$-semistable.

One possibility is extensions
$$0 \rightarrow \cO_X(L) \rightarrow E \rightarrow
\cO_X(D-L) \rightarrow 0, 
$$
where $L$ is a divisor with
$$
H\cdot L = H\cdot (D-L), \quad 
\dim \mathrm{Ext}^1(\cO_X(D-L),\cO_X(L))=2.
$$
Here $\xi = 2L-D \in H^{\perp}$ satisfies $\xi^2=-8$.
Writing $\hat{s}=v(L)$ we have: 
$$
\begin{array}{c|cc}
   & v & \hat{s} \\
\hline
v & 0 & 0 \\
\hat{s} & 0 & -2
\end{array}
$$
Thus $\hat{s}$ gives rise to a $(-2)$-class in the
Picard group of $M_{H'}(X,v)$ for $H'$ a polarization
outside the walls; typically this is the class of a
rational curve isomorphic to $\bP(\mathrm{Ext}^1(\cO_X(D-L),\cO_X(L)))$, contracted in $M_H^{\mu ss}(X,v)$ with
complement $M_H^{\mu s}(X,v)$.  

The other possibility is extensions 
\begin{equation} \label{ext:ideal}
0 \rightarrow \cO_X(L) \rightarrow E \rightarrow
I_x(D-L) \rightarrow 0,
\end{equation}
where $x\in X$ and $L$ is a divisor
$$
H\cdot L = H\cdot (D-L), \quad 
\dim \mathrm{Hom}(L,E)=1.
$$
Here $\xi = 2L-D \in H^{\perp}$ satisfies $\xi^2=-4$, writing $\hat{s}=v(L)$ we have: 
$$
\begin{array}{c|cc}
   & v & \hat{s} \\
\hline
v & 0 & -1 \\
\hat{s} & -1 & -2
\end{array}
$$
Here $M^{\mu s}_H(X,v)=\emptyset$ reflecting 
the fact that the extension (\ref{ext:ideal}) may be
trivial or nontrivial. The resulting coarse moduli space
is isomorphic to $X$; this is called a ``totally semistable''
wall.
\end{exam}

This dichotomy in the wall types is typical and 
explained in \cite[\S 12]{Bridge} (for two-dimensional
moduli spaces) and \cite[Th.~5.7]{BMacri} (in general);
we are grateful to Bayer for pointing out this framework.
The possible walls are all associated with spherical
classes $\hat{s}$ with $v(\hat{s})^2=-2$ of two types:
\begin{itemize}
\item {\em contracting walls:}
$$
\begin{array}{c|cc}
   & v & \hat{s} \\
\hline
v & 0 & 0 \\
\hat{s} & 0 & -2
\end{array}
$$
where $M_H^{\mu ss}(X,v)$ has a contractible
$(-2)$-class in its Picard group;
\item {\em totally semistable walls:}
$$
\begin{array}{c|cc}
   & v & \hat{s} \\
\hline
v & 0 & -r \\
\hat{s} & -r & -2
\end{array}, \quad r\ge 1,
$$
where $M_H^{\mu s}(X,v)$ is empty but the coarse moduli space is left unchanged.
\end{itemize}
Bridgeland \cite{Bridge} elucidates the typical behavior;
we refer the reader to \cite[\S 6]{BMacri} for details
of the derived equivalences associated with
wall crossing arising as compositions of spherical 
twists associated the $(-2)$-classes.

\begin{exam}
Mukai \cite[3.8]{MukaiTata} offers examples of the
second type. Let $F$ be a rigid vector bundle with
$v(F)=\hat{s}$, $r$ its rank, and $E$ the kernel 
of evaluation at a skyscraper sheaf at $x\in X$:
$$ 0 \rightarrow E \rightarrow  F^{\oplus r} \rightarrow
\bC(x) \rightarrow 0.$$
Note that $E$ has local cohomology at $x$ and thus
cannot be locally-free.  
\end{exam}

Suppose that the polarization varies over the ample cone. 
As we cross walls of either type, the moduli spaces associated with adjacent chambers are naturally isomorphic. 
Even for a contracting wall, the minimal resolution of
the nodal moduli space is naturally isomorphic to 
moduli spaces associated with each side.
Since the ample cone is simply-connected,
for all ample $H_1$ and $H_2$ we obtain natural isomorphisms
\begin{equation} \label{getbeta}
\beta_{H_2,H_1}: M_{H_1}^{\mu ss}(X,v) 
\stackrel{\sim}{\rightarrow} M_{H_2}^{\mu ss}(X,v);
\end{equation}
see the discussion following \cite[Th.~1.1]{BMacri}.
This is an instance of the general phenomenon that wall-crossing induces birational maps among moduli spaces of vector bundles on surfaces \cite[4.C.7]{HLbook}. 
However, the universal sheaves over these moduli spaces
-- and the derived equivalences they induce -- do 
vary from chamber to chamber (see\cite[Th.~1.1(b)]{BMacri}).
In particular,  explicit formulas as in Proposition~\ref{prop:yoshioka} are not available for higher rank K3 surfaces.

\

An application of wall-crossing, and a template
for our results in Section~\ref{sect:skew}, is the following
result of Huybrechts \cite[Prop.~4.1]{HuyJAG}: 
Let $X_1$ and $X_2$ be derived equivalent K3 surfaces. 
Then there exists a moduli space of $\mu_H$-stable locally-free sheaves with universal family
$$\cE \rightarrow M^{\mu s}_H(X_2,v)\times X_2$$
and an isomorphism $X_1 \simeq M^{\mu s}_H(X_2,v)$.

\section{Orientation reversing conjugation}
\label{sect:skew}

We continue to assume that $i$ is an anti-symplectic involution
on a K3 surface $X$. As we have seen,
$$
T(X) \subset \rH^2(X,\bZ)^{i=-1},
$$
with complement $\Pic(X)^{i=-1}$, which is negative definite 
by the Hodge index theorem.  

Recall that
Orlov's Theorem \cite[\S 3]{Orlov} asserts that for K3 surfaces (without group
action) isomorphisms of transcendental cohomology lift to
derived equivalences.
Given K3 surfaces $(X_1,i_1)$ and $(X_2,i_2)$ with
anti-symplectic involutions of the same
type in the sense of Alexeev-Nikulin, the existence of an
isomorphism 
$$T(X_1) \stackrel{\sim}{\ra} T(X_2)$$ 
seldom induces an equivariant derived equivalence; a notable exception 
is the case where the anti-invariant Picard group has rank zero or one. 
We only have that
$$\Pic(X_1)^{i_1=-1}, \quad \Pic(X_2)^{i_2=-1}$$
are stably equivalent -- compatibly with the isomorphism on the
discriminant groups of the transcendental lattices -- but not
necessarily isomorphic.

In light of this, 
we propose an orientation reversing conjugation of actions,
with a view toward realizing isomorphisms of transcendental 
cohomology.  

Assume that 
$\Pic(X_1)^{i_1=-1}$ and $\Pic(X_2)^{i_2=-1}$
are not isomorphic, so
there is no $C_2$-equivariant derived equivalence 
$$
D^b(X_1) \stackrel{\sim}{\ra} D^b(X_2)
$$
taking $i_1$ to $i_2$, by Corollary \ref{coro:antisyminv}.
However, let
$$
\dual_j: D^b(X_j) \stackrel{\sim}{\ra} D^b(X_j), \quad j=1,2,
$$
denote the involution  
$$
\cE_* \mapsto \cE^{\vee}_*.
$$
Note that shift and duality commute with each other and with
any automorphism of the K3 surface.  
The action of $\dual_j$ on the Mukai lattice $\widetilde{\rH}(X_j,\bZ)$
is trivial in degrees $0$ and $4$ and multiplication by $-1$ in degree two. Recall that shift acts via $-1$ in all degrees, so composition with
$\dual_j$ is trivial in degree $2$ and multiplication by $-1$ in 
degrees $0$ and $4$.

We propose a general definition and then
explain how it is related to our analysis
of quadratic forms with involution:

\begin{defi}
\label{defi:dualize}
Let $(X_1,i_1)$ and $(X_2,i_2)$ be smooth
projective varieties with involution,
of dimension $n$ with trivial canonical class.  
They are {\em skew equivalent} if there
is a kernel $\cK$ on $X_1\times X_2$,
inducing an equivalence between $X_1$ and $X_2$,
and a quasi-isomorphism
\begin{equation} \label{eq:functional}
(i_1^*,i_2^*)\cK \stackrel{\sim}{\rightarrow} \cK^{\vee}[n].
\end{equation}
\end{defi}
Note that dualization coincides with 
the relative dualizing complex for both
projections $\pi_1$ and $\pi_2$.
The quasi-isomorphism (\ref{eq:functional}) is involutive 
$$ \cK \mapsto \cK^{\vee}[n] \mapsto (\cK^{\vee}[n])^{\vee}[n]\simeq \cK,$$
i.e., $(i_1,i_2)$ takes $\cK$ to the kernel inducing
the inverse of $\Phi_{\cK}$. Since $\cK$ is simple and the base field
is algebraically closed, the quasi-isomorphism may be normalized so this composition is the identity.

Our first property follows straight from the definition:
\begin{prop} \label{prop:tensorskew}
Suppose that $(X_1,i_1)$ and $(X_2,i_2)$ are as specified
in Definition~\ref{defi:dualize} and $\cK$ induces a skew
equivalence between them. Consider line bundles $L_1$ and $L_2$ on $X_1$ and $X_2$ that are anti-invariant under $i_1$ and $i_2$
$$i_j^*L_j \simeq L_j^{\vee}.$$
Then $\cK \otimes (L_1 \boxtimes L_2)$ also induces a skew equivalence.
\end{prop}

Our next property makes explicit the behavior under duality:
\begin{prop} \label{prop:equiv}
Let $(X_1,i_1)$ and $(X_2,i_2)$ be K3 surfaces equipped with involutions.
Suppose that $\cK$ is a kernel inducing an equivalence between
$X_1$ and $X_2$, with induced Fourier-Mukai transforms
$$\Phi_{\cK}:D^b(X_1) \rightarrow D^b(X_2), \quad
\Psi_{\cK}:D^b(X_1) \rightarrow D^b(X_2).$$
Then the following are equivalent:
\begin{itemize}
\item{
$\Phi_{\cK} \dual_1 i_1^* = \dual_2 i_2^* \Phi_{\cK}$;}
\item{
$i_1^* = \Psi_{\cK} i_2^* \Phi_{\cK}$;}
\item{$\cK$ induces a skew equivalence between $(X_1,i_1)$
and $(X_2,i_2)$.}
\end{itemize}
\end{prop}
\begin{proof}
Recall the interpretations of duality and inverses
of Fourier-Mukai equivalences in Section~\ref{sect:cfm}.  
Let $T_j$ denote the shift on $X_j$.
Applying duality gives to the first expression gives
$$T^{-2}_2 \dual_2 \Phi_{\cK^{\vee}} i_1^* = \dual_2 i_2^* \Phi_{\cK}$$
whence
\begin{equation} \label{eqn:keyform}
T_2^2 \Phi_{\cK^{\vee}} i_1^* = i_2^* \Phi_{\cK}.
\end{equation}
This is equivalent to
$$
\Psi_{\cK} \Phi_{\cK^{\vee}} i_1^*= \Psi_{\cK} T_2^{-2} i_2^* \Phi_{\cK}
$$
and 
$$
T_2^{-2}  i_1^* = \Psi_{\cK} T_2^{-2} i_2^* \Phi_{\cK}
$$
which is the same as
$$i_1^* = \Psi_{\cK} i_2^* \Phi_{\cK}.$$
Now formula (\ref{eqn:keyform}) is equivalent to
$$T_2^2 \Phi_{\cK^{\vee}}= i_2^* \Phi_{\cK} i_1^*$$
i.e., applying $i_1^* \times i_2^*$ transforms 
$\cK$ to $\cK^{\vee}[2]$.  
\end{proof}
The second item in Proposition~\ref{prop:equiv} immediately
yields:
\begin{coro}
Skew equivalence is an equivalence relation on K3 surfaces with
involution.
\end{coro}

Suppose again that $X_1$ and $X_2$ are K3 surfaces and $\cK=\cE[1]$ for a universal vector bundle 
$$\cE \rightarrow X_1 \times X_2$$
associated with an isomorphism $X_1=M_v(X_2)$.
Then relation (\ref{eq:functional}) (with $n=2$) translates
into 
\begin{equation} \label{eq:funcVB}
i_2^*\cE_{i_1(x_1)}\simeq (\cE_{x_1})^{\vee}.
\end{equation}

\begin{theo}
\label{theo:skew}
Let $(X_1,i_1)$ and $(X_2,i_2)$ be K3 surfaces 
with involutions. Then the following are equivalent
\begin{itemize}
\item{$(X_1,i_1)$ and $(X_2,i_2)$ are skew derived equivalent;}
\item{there exists an orientation-preserving 
equivalence of Mukai lattices
$$
\phi: \widetilde{\rH}(X_1,\bZ) \longrightarrow \widetilde{\rH}(X_2,\bZ),
$$
satisfying
\begin{equation} \label{eq:CFMTfunc}
\phi(i_1^*(v^{\vee}))=(i_2^*\phi(v))^{\vee}.
\end{equation}
}
\end{itemize}
\end{theo}
As duality and pullback 
commute with each other, the order of these
operations in (\ref{eq:CFMTfunc}) is immaterial.
Furthermore, if $\phi$ satisfies this relation
then so does $-\phi$.  

\begin{rema} \label{rema:twist}
We are not asserting that each cohomological 
equivalence satisfying (\ref{eq:CFMTfunc}) arises from a 
skew equivalence. 
Suppose that $X_1=X_2=X$ with the same involution $i$.  Consider
the spherical twist associated with $\cO_X$ with kernel 
$\cI_{\Delta}[1]$ and cohomology matrix
\begin{equation}
\tau_{\cO_X}:=\left( \begin{matrix} 0 & 0 & -1 \\
				  0 & \mathrm I & 0 \\
				  -1 & 0 & 0  \end{matrix} \right).
\end{equation}
Neither $\tau_{\cO_X}$ nor $-\tau_{\cO_X}$ is obviously realized by a kernel with the requisite self-duality property. Of course, the identity induces a skew equivalence of $(X,i)$ with itself!
Suppose now that $(X_1,i_1)$ and $(X_2,i_2)$ are
arbitrary K3 surfaces with involution. Given $\phi$ satisfying
(\ref{eq:CFMTfunc}), we may pre-compose or post-compose
with $\tau_{\cO_{X_1}}$ or $\tau_{\cO_{X_2}}$ to get another
matrix with the same property.   
\end{rema}

\begin{proof}[Proof of Theorem~\ref{theo:skew}]
The forward implication is clear. Indeed, the 
cohomological Fourier-Mukai transform $\phi_{\cK}$ of a skew equivalence satisfies
$$(i_1,i_2)^* \phi_{\cK} = \phi_{\cK^{\vee}}$$
but $\phi_{\cK^{\vee}}$ differs from $\phi_{\cK}$
by the involution acting via $+1$ on $\rH^0$ and
$\rH^4$ and $-1$ on $\rH^2$.  Thus 
$$\phi_{\cK}: \widetilde{\rH}(X_1,\bZ) \longrightarrow
\widetilde{\rH}(X_2,\bZ)$$
satisfies relation (\ref{eq:CFMTfunc}).

For the reverse implication, we consider the
cohomological Fourier-Mukai transform
$$\phi:
\widetilde{\rH}(X_1,\bZ) \longrightarrow
\widetilde{\rH}(X_2,\bZ).
$$
Set 
$$v_0 := \phi(0,0,1)=(r,a\ell,s),
$$
where $\ell \in \Pic(X_2)$ is primitive and $a\in \bN$. 
\begin{itemize}
\item{
The relation (\ref{eq:CFMTfunc}) implies that $i_2^*\ell=-\ell$,
which means that $\ell^2<0$ if $\ell \neq 0$.
(The Hodge index theorem implies that the intersection form on 
the anti-invariant divisors is negative definite.)}
\item{
Writing $\phi(1,0,0)=(r',D',s')$ we have
$$a (\ell\cdot D') - rs'-sr' = -1$$
whence $\gcd(r,s,a\ell \cdot D')=1$ for some anti-invariant
divisor $D'$ on $X_2$.
Hence $\gcd(r,s,a)=1$ as well.}
\item{If $\ell \neq 0$ then both $r$ and $s$ are nonzero as $v_0$ is
isotropic. 
If $\ell=0$ then $r=0$ or $s=0$ but both cannot vanish.
After applying a twist $\tau_{\cO_{X_2}}$ we may assume that 
$r\neq 0$.}
\item{
If $r<0$, we may replace $\phi$ by $-\phi$. From now on,
we therefore assume $r>0$.}
\end{itemize}

We follow \S 4 of \cite{HuyJAG} to reduce to circumstances where the wall-crossing analysis
of Section~\ref{sect:crossing} may be carried out.

\ 

\noindent {\bf Case I:} $\Pic(X_2)^{i_2=-1}=0$ \\
This case -- with $\ell=0$ -- was addressed above.

\ 

\noindent {\bf Case II:} $\Pic(X_2)^{i_2=-1}=1$\\
Taking $\ell$ to be the generator, all the possible
equivalences are realized with Mukai vectors
$$v_0=(r,\ell,s), \quad \gcd(r,s)=1.$$
Indeed, this follows from Corollary~\ref{coro:counting}:
Writing $\ell^2=-2d$ and factoring $d=\prod_{j=1}^m p_j^{e_j}$
into distinct primes, we see that the automorphism group of $d(\bZ \ell)$ is $C_2^m$. 

Thus it suffices to consider
Mukai vectors with primitive first Chern class,
where wall crossing applies.

\ 

\noindent {\bf Case III:} $\Pic(X_2)^{i_2=-1}\ge 2$ \\
\begin{enumerate}
\item{Suppose that $v_0=(r,a\ell,s)$, with $\gcd(r,a)=1$. 
Then there exists a anti-invariant divisor $E$ on $X_2$
such that $D=rE+a\ell$ is primitive. In particular,
after tensoring by $\cO_X(E)$ the first Chern class
is primitive. However, tensoring by line bundles
has no impact on $\mu$-stability.}
\item{If only $\gcd(s,a)=1$, then after applying the twist
$\tau_{\cO_{X_2}}$ we have $\gcd(r,a)=1$. }
\item{Suppose that $\gcd(r,a)=\alpha>1$ and write 
$$v_0=(r,a\ell,s)=(\alpha r', \alpha a' \ell, s).$$
As before, choose 
an anti-invariant divisor $E$ such that $a'\ell+r'E$ is 
primitive. Tensoring by $E$ gives
\begin{align*}
\exp(E)v_0 &=
(r, a\ell+rE, \tilde{s}:=s+ a E\cdot \ell + r \frac{E\cdot E}{2})\\
&=(r,\alpha(a'\ell+r'E),\tilde{s}).
\end{align*}
Now
$$1=\gcd(r,a,s)=\gcd(r,a,\tilde{s})=\gcd(\alpha,\tilde{s})$$
so we are reduced to the previous case.
}
\end{enumerate}
To summarize, up to twists by $\cO_{X_2}$ that have
no impact on our final result, for each Mukai vector
$v$ inducing a derived equivalence we may always achieve 
$$M_H^{\mu s}(X_2,v)=M_H^{\mu ss}(X_2,v)$$ 
for polarizations $H$ avoiding walls.
This completes Case III.

\

We return to the situation where $v_0=(r,a\ell,s)$ with $\ell$ anti-invariant, applying the wall-crossing technique of Section~\ref{sect:crossing}.
Consider $M^{\mu s}_H(X_2,v_0)$, a K3 surface derived-equivalent to $X_2$, where $H$ is a polarization on $X_2$ avoiding the walls. We produce an involution $j$
on this moduli space by composing isomorphisms
$$M^{\mu s}_H(X_2,v_0) \rightarrow
M^{\mu s}_H(X_2,-v_0) \rightarrow
M^{\mu s}_{i_2^*H}(X_2,v_0) \rightarrow
M^{\mu s}_{H}(X_2,v_0),$$
where the first isomorphism is induced by
duality, the second is induced by $i_2$, and the
third is $\beta_{i_2^*H,H}$ introduced in (\ref{getbeta}).
To see that this is an involution, observe that
$\beta_{H,i_2^*H}=\beta_{i_2^*H,H}^{-1}$
and use the fact that $i_2$ and duality are involutive
and commute with each other.

We analyze how $j$ acts on the cohomology
\begin{equation} \label{gettheta}
\rH^2(M^{\mu s}_H(X_2,v_0),\bZ) 
= (v^{\vee}_0)^{\perp}/ \bZ v^{\vee}_0.
\end{equation}
The isomorphism $\beta$ allows us fix these identifications
as $H$ varies, even as we cross walls. 
The action of $i_2$ on the Mukai lattice and
the associated cohomology groups is given by 
functoriality; recall that $i_2$ takes $v_0$ to 
its dual. For dualization
$$
M^{\mu s}_H(X_2,v_0) \rightarrow M^{\mu s}_H(X_2,-v_0)
$$ the
action is
\begin{align*}
(v^{\vee}_0)^{\perp}/ \bZ v^{\vee}_0 & \rightarrow 
(v_0)^{\perp}/ \bZ v_0 \\
   \gamma & \mapsto -\gamma^{\vee}.
\end{align*}
Indeed, the construction of (\ref{gettheta}) in 
\cite[8.1.1]{HL} and Serre duality for K3 surfaces -- modulo shift by two, the cohomology of the dual of a sheaf is the dual of its cohomology -- shows this is the induced mapping.   
To conclude $j^*$ acts as follows:
\begin{itemize}
\item{
multiplication by $+1$ on 
$$\rH^2(X_2,\bZ)^{i_2=1} \subset \rH^2((M^{\mu s}_H(X_2,v_0)),\bZ);$$}
\item{
multiplication by $-1$ on 
$$(v_0^{\vee})^{\perp} \cap \rH^0(X_2,\bZ) \oplus \rH^4(X_2,\bZ);
$$}
\item{
multiplication by $-1$ on 
$$(v_0^{\vee})^{\perp} \cap \rH^2(X_2,\bZ)^{i_2=-1}.$$}
\end{itemize}

We follow \cite[p.~235]{HuFMTAG}. 
Looking at the composed cohomological Fourier-Mukai transforms
$$\widetilde{\rH}(X_1,\bZ) \stackrel{\sim}{\ra} \widetilde{\rH}(X_2,\bZ) 
\stackrel{\sim}{\ra} \widetilde{\rH}(M^{\mu s}_H(X_2,v_0),\bZ),$$
which takes $(0,0,1)$ to $(0,0,1)$, the Torelli theorem guarantees
that $X_1\simeq M^{\mu s}_H(X_2,v_0)$. The function relation (\ref{eq:CFMTfunc})for $\phi$ guarantees that $i_1$ coincides with $j$ under this isomorphism.

Our moduli space admits a universal sheaf \cite[Prop.~10.20]{HuFMTAG}
$$\cE \rightarrow X_2 \times M^{\mu s}_H(X_2,v_0),$$
unique up to tensor product by line bundles on the moduli space.
On the other hand, 
$$(i_2,I)^*\cE^{\vee} \rightarrow X_2 \times M^{\mu s}_{i_2^*H}(X_2,v_0),$$
is also a universal sheaf, as is 
$$(i_2,j)^*\cE^{\vee} \rightarrow X_2 \times M^{\mu s}_H(X_2,v_0).$$
Applying the isomorphism with $X_1$, we obtain
$$
(i_2,i_1)^*\cE^{\vee} \simeq \cE \otimes L_1,
$$
for some line bundle $L_1$ on $X_1$.  
This is equivalent to
$$(i_2,i_1)^*\cE \simeq \cE^{\vee} \otimes L_1^{\vee}$$
and
$$\cE \simeq (i_2,i_1)^* \cE^{\vee} \otimes i_1^*L_1^{\vee}$$
whence $L_1$ is necessarily symmetric under $i_1$. 

Rescaling $\cE \mapsto \cE \otimes N_1$, for $N_1$ a
line bundle on $X_1$, takes
$$L_1 \mapsto L_1 \otimes N_1 \otimes i_1^*N_1.$$
{\em A priori}, the obstruction to obtaining the
relation 
$$(i_2,i_1)^*\cE^{\vee} \simeq \cE$$
is a cocycle in $\rH^2(\left<i_1\right>,\Pic(X_1))$. 
However, any such obstruction would be visible on cohomology
and thus is precluded by the relation (\ref{eq:CFMTfunc}).
\end{proof}

\begin{coro} \label{coro:altskew}
Under the assumptions above, the functors $\dual_1 \circ i_1$
and $\dual_2 \circ i_2$ are $C_2$-equivariantly derived equivalent.  
\end{coro}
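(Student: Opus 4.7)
The plan is to verify that the very Fourier--Mukai kernel $\cK$ implementing the skew equivalence of $(X_1, i_1)$ and $(X_2, i_2)$ also implements a $C_2$-equivariant derived equivalence for the autoequivalences $\dual_k \circ i_k$ of $D^b(X_k)$. Concretely, I would establish the isomorphism of functors
\[
\Phi_\cK \circ (\dual_1 \circ i_1) \;\simeq\; (\dual_2 \circ i_2) \circ \Phi_\cK,
\]
from which the corollary follows immediately.

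For the computation, I would combine the Mukai duality identity $\Phi_\cK(\cE^\vee) \simeq (\Phi_{\cK^\vee[2]}\cE)^\vee$ recalled in Section~\ref{sect:cfm} with the elementary rule that precomposition with $i_1^*$ replaces the kernel $\cK$ by $(i_1, \mathrm{id})^*\cK$, and postcomposition with $i_2^*$ replaces it by $(\mathrm{id}, i_2)^*\cK$ (via flat base change along the evident Cartesian squares). This yields
\[
(\Phi_\cK \circ \dual_1 \circ i_1)(\cE) \;=\; \bigl(\Phi_{(i_1,\mathrm{id})^*(\cK^\vee[2])}(\cE)\bigr)^\vee,
\]
\[
\bigl((\dual_2 \circ i_2) \circ \Phi_\cK\bigr)(\cE) \;=\; \bigl(\Phi_{(\mathrm{id},i_2)^*\cK}(\cE)\bigr)^\vee.
\]
Equating the two inner FM transforms reduces the claim to the kernel identification $(i_1,\mathrm{id})^*(\cK^\vee[2]) \simeq (\mathrm{id},i_2)^*\cK$; pulling back once more by $(i_1,\mathrm{id})$, this becomes $\cK^\vee[2] \simeq (i_1, i_2)^*\cK$, which is precisely the functional equation \eqref{eq:functional} from Definition~\ref{defn:dualize}.

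Cohomologically the same reasoning is visible through Proposition~\ref{prop:skew}: the isometry $\phi_\cK$ satisfies $\phi_\cK(i_1^*(v^\vee)) = (i_2^*\phi_\cK(v))^\vee$, which is exactly $C_2$-equivariance for the involutions $v \mapsto i_k^*(v^\vee)$ on $\tilde\rH(X_k,\bZ)$ induced by $\dual_k \circ i_k$; by the Mukai-lattice criterion for $C_2$-equivariant derived equivalence recalled in Section~\ref{sect:genK3}, this orientation-preserving equivariant Hodge isometry suffices. The one subtlety --- the choice of normalization for $\xi: \cK \xrightarrow{\sim} (i_1, i_2)^*\cK^\vee[2]$, determined only up to scalar --- is immaterial for derived equivalence, as already observed at the end of the proof of Proposition~\ref{prop:skew}, so no real obstacle remains: the skew condition was engineered precisely so that Mukai's dualization absorbs the $\cK^\vee[2]$ produced by pulling the kernel back along $(i_1, i_2)$.
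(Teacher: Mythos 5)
Your proposal is correct and is precisely the intended argument: the paper states this corollary without proof as an immediate consequence of Proposition~\ref{prop:skew}, and your computation --- reducing the intertwining relation $\Phi_{\cK}\circ(\dual_1\circ i_1)\simeq(\dual_2\circ i_2)\circ\Phi_{\cK}$ to the functional equation $(i_1,i_2)^*\cK\simeq\cK^{\vee}[2]$ via Mukai's duality formula and base change --- fills in exactly that gap. Your remark that the scalar ambiguity in the normalization of $\xi$ is immaterial matches the paper's own disclaimer at the end of the proof of Proposition~\ref{prop:skew}.
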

This motivates the formulation of Proposition~\ref{prop:bootstrap}.

\begin{rema}
As we recalled in Section~\ref{sect:genK3},
derived equivalences respect orientations
on the Mukai lattice \cite{HMS}. The orientation reversing  
conjugation violates the orientation condition,
in a prescribed way. Duality is the archetypal orientation-reversing 
Hodge isogeny.  
\end{rema}

In Sections~\ref{sect:ratq} and \ref{sect:genEnr} we give examples of such equivalences.

\section{Rational quotients and skew equivalence}
\label{sect:ratq}

Our first task is to give examples of skew equivalences
using Theorem~\ref{theo:skew}.
We remind the reader to consult Proposition~\ref{prop:bootstrap} for the relevant lattice machinery.

The simplest examples are in rank two.
Take $(X_1,h_1)$ and $(X_2,h_2)$ to be degree-two K3 surfaces, with associated involutions
$i_1$ and $i_2$, such that $T(X_1)\simeq T(X_2)$. 
Suppose that
$$\Pic(X_j)^{i_j}=\bZ \ell_j,\quad  \ell_j^2=-d;$$
note that $\Pic(X_j)$ is either $\left<h_j,\ell_j\right>$ or $\left<h_j,\frac{h_j+\ell_j}{2}\right>$, i.e., the
distinguished $2$-elementary subgroup is trivial
or cyclic. By Corollary~\ref{coro:counting}, possible examples correspond to isomorphisms 
$$d(\bZ \ell_1) \simeq d(\bZ \ell_2)$$
preserving the distinguished subgroup -- a 
vacuous condition as the discriminant group is cyclic.
Thus $(X_1,i_1)$ and $(X_2,i_2)$ are skew equivalent. 
\begin{rema}
In many examples, $M^{\mu s}_{h_j}(X_j,v_0)$ is automatically compact for $v_0=(r,\ell,s)$, with $r<|s|$
and $\gcd(r,s)=1$,
because $h_j$ happens not to lie on a wall.
\end{rema}

The next group of examples arise from nontrivial
stable isomorphisms. We exhibit 
lattice-polarized K3 surfaces with involution $(X_1,i_1)$
and $(X_2,i_2)$, such that the anti-invariant Picard groups
are stably equivalent but inequivalent. 

Specifically, we assume $X_1$ and $X_2$ are degree two 
K3 surfaces with
$$
\Pic(X_j) = \bZ h_j \oplus A_j(-1), \quad h_j^2=2,
$$
where the involutions fix the $h_j$ and reverse
signs on $A_j$'s.    
If $A_1$ and $A_2$ are stably-equivalent, inequivalent positive
definite lattices then $(X_1,i_1)$ and 
$(X_2,i_2)$ are skew equivalent.  

In contrast to ordinary equivalences 
(see \ref{coro:antisyminv}) there are anti-symplectic 
actions with nontrivial {\em skew} equivalences.
The resulting quotients are rational surfaces, indeed,
$\bP^2$.  

\begin{exam}[Explicit matrices]
The matrices, in the basis $p_j,q_j$, for $j=1,2$, are given by
$$
A_1:=
\begin{pmatrix} 4 & 1 \\ 1 & 12
\end{pmatrix},
\quad 
A_2:=
\begin{pmatrix} 		6 & 	1 \\
1	& 8 
\end{pmatrix}.
$$
We extract a stable isomorphism
$$
A_1 \oplus \rU \simeq A_2 \oplus \rU,
\quad 
\rU=\left<u_1,v_1\right>,  \text{ with matrix }
\begin{pmatrix} 0 & 1 \\ 1 & 0 \end{pmatrix}.
$$

First, we give an isomorphism
$$
A_1 \oplus \left<e_1\right> \simeq A_2 \oplus \left<e_2\right>, 
\quad e_1^2 = -2. $$
We put
$$
p_1 \mapsto p_2 + e_2, 
$$
and claim that the orthogonal complements to these are equivalent indefinite lattices. Indeed, 
$$
p_1^{\perp} = \left<p_1-4q_1, e_1\right>= 
\begin{pmatrix} 188 & 0 \\ 0 & -2  \end{pmatrix},
$$
\begin{align*}
(p_2+e_2)^{\perp}  & = \left<p_2-6q_2, 2q_2 + e_2\right> 
= \begin{pmatrix}   282  & -94 \\-94 & 30  \end{pmatrix} \\
& = \left<p_2+3e_2, 2q_2+e_2\right> = \begin{pmatrix}  -12 & -4 \\ -4 & 30 
\end{pmatrix}
\end{align*}
These are equivalent via Gaussian cycles of reduced forms
$$
\begin{array}{ccccccccc}
	 & 0	&   &  	18  &       & 8	&   & 4 & \\
188	 &      &-2	&       &	26	&   &-12&   &30
\end{array}
$$
where the indicated basis elements are
$$
p_1-4q_1, \quad e_1, \quad  p_1 - 4q_1 - 9e_1,	\quad p_1-4q_1-10e_1, \quad
2(p_1-4q_1) - 19e_1.
$$
The composed isomorphism is
\begin{align*}
p_1-4q_1-10e_1    & \mapsto p_2+3e_2, \\ 
2(p_1-4q_1)-19e_1 &   \mapsto 2q_2+e_2 \\
p_1 & \mapsto 		p_2 + e_2\\
e_1 & \mapsto 		(2q_2+e_2) - 2(p_2+3e_2) = 2(q_2-p_2) - 5e_2\\
q_1 &   \mapsto 		5(p_2-q_2) + 12 e_2.
\end{align*}
\end{exam}

We extend the isomorphism above where $e_i = u_i-v_i$
\begin{align*}
u_1+v_1 &\mapsto	u_2+v_2\\
u_1-v_1 &\mapsto	2(q_2-p_2) - 5(u_2-v_2)\\
p_1 	&\mapsto 	p_2 + (u_2-v_2)\\
q_1		& \mapsto 	5(p_2-q_2) + 12(u_2-v_2)
\end{align*}
whence we have
\begin{align*}
u_1 	&\mapsto 	(q_2-p_2) - 2u_2 + 3v_2\\
v_1	 &\mapsto	(p_2-q_2) + 3u_2 - 2v_2.
\end{align*}

\section{Nikulin involutions}
\label{sect:nik}

\subsection*{General properties}
An involution $\iota$ on a K3 surface $X$ over $\bC$ preserving the symplectic form is called a {\em Nikulin} involution. We recall basic facts concerning such involutions, following \cite{Geemen-S}:
\begin{itemize}
\item $\iota$ has 8 isolated fixed points;
\item the (resolution of singularities) $Y\rightarrow X/\iota$ is a K3 surface fitting into a diagram
$$\begin{array}{ccl}
X & \stackrel{\beta}{\leftarrow} & \widetilde{X} \\
\downarrow &   &  \downarrow  \scriptstyle{\pi} \\
X/\iota & \leftarrow & Y 
\end{array}
$$
where $\beta$ blows up the fixed points and the vertical arrows
have degree two;
\item the action of $\iota$ on $\rH^2(X,\bZ)$ is uniquely determined, and there is a decomposition
$$
\rH^2(X,\bZ) = (\rU^{\oplus 3})_1 \oplus (\rE_8(-1) \oplus \rE_8(-1))_P,
$$
where the first term is invariant and the second is a permutation
module for $\iota$;
\item the invariant and the anti-invariant parts of $\rH^2$ take the form:
$$
\rH^2(X,\bZ)^{\iota=1} \simeq \rU^3\oplus \rE_8(-2), \quad 
\rH^2(X,\bZ)^{\iota=-1} = \rE_8(-2)
$$
\end{itemize}
Let $E_1,\ldots,E_8$ denote the exceptional divisors of $\beta$
and $N_1,\ldots,N_8$ the corresponding $(-2)$-curves on $Y$.
The union $\cup N_i$ is the branch locus of $\pi$
so there is a divisor
$$\hat{N}=(N_1+ \cdots + N_8)/2$$
saturating $\left<N_1,\ldots,N_8\right> \subset \Pic(Y)$; the minimal primitive sublattice containing these divisors is called the {\em Nikulin} lattice, and is denoted by $\rN$. 
We have \cite[Prop.~1.8]{Geemen-S}
$$
\begin{array}{rcl}
\pi_*: \rH^2(\widetilde{X},\bZ) & \ra & \rH^2(Y,\bZ) \\
    \rU^3 \oplus \rE_8(-1) \oplus \rE_8(-1) \oplus \left<-1\right>^8 & \ra &   \rU(2)^3 \oplus \rN \oplus \rE_8(-1) \\
                (u,x,y,z) & \mapsto & (u,z,x+y)
\end{array}
$$
and 
$$
\begin{array}{rcl}
\pi^*: \rH^2(Y,\bZ) & \ra & \rH^2(\widetilde{X},\bZ) \\
\rU(2)^3 \oplus \rN \oplus \rE_8(-1) & \ra & 
\rU^3 \oplus \rE_8(-1) \oplus \rE_8(-1) \oplus \left<-1\right>^8 \\
(u,n,x) & \mapsto & (2u,x,x,2\tilde{n})
\end{array}
$$
where if $n=\sum n_iN_i$ then $\tilde{n}=\sum n_i E_i$.
Thus we obtain a distinguished saturated sublattice
$$
\rE_8(-2) \subset \Pic(X)
$$
that coincides with the $\iota=-1$ piece.

\begin{prop} \label{prop:IdentifyNik}
Fix a lattice $\rL$ containing $\rE_8(-2)$ as a primitive sublattice;
assume $\rL$ arises as the Picard lattice of a projective K3 surface.
Then there exists a K3 surface $X$ with Nikulin involution
$\iota$ such that 
$$
\rL = \Pic(X) \supset \Pic(X)^{\iota=-1} = \rE_8(-2).$$
\end{prop}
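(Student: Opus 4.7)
The plan is to realize the required Nikulin involution $\iota$ first as an isometry of the K3 lattice $\Lambda=\rU^{\oplus 3}\oplus \rE_8(-1)^{\oplus 2}$, and then transfer it to a K3 surface via the surjectivity of the period map and the strong Torelli theorem. The model $\iota \in \mathrm{O}(\Lambda)$ acts trivially on $\rU^{\oplus 3}$ and swaps the two copies of $\rE_8(-1)$; its $(-1)$-eigenspace is the anti-diagonal copy $\rE_8(-2) \hookrightarrow \rE_8(-1)^{\oplus 2}$ and its $(+1)$-eigenspace is $\rU^{\oplus 3}\oplus \rE_8(-2)$ (the diagonal copy).

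The first and most delicate step will be to arrange the embedding so that the distinguished primitive sublattice $\rE_8(-2)\subset \rL$ is carried to the anti-diagonal copy. The hypothesis that $\rL$ is a Picard lattice provides some primitive embedding $\rL \hookrightarrow \Lambda$. Proposition~\ref{prop:nikulin3} applies to $\rE_8(-2)$: it has signature $(0,8)$ and one checks $\ell_+ + \ell_- - t_+ - t_- = 22-8 = 14 \ge 2 + 8 = 2 + \ell(d(\rE_8(-2)))$, so primitive embeddings of $\rE_8(-2)$ into $\Lambda$ are unique up to $\mathrm{O}(\Lambda)$. Composing with an appropriate automorphism of $\Lambda$, we may therefore assume that the induced embedding $\rE_8(-2)\subset \rL\hookrightarrow \Lambda$ is the anti-diagonal one.

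Next I would check that $\iota$ preserves $\rL$ as a sublattice of $\Lambda$. Let $M:=\rE_8(-2)^\perp \cap \rL$. Because $M$ is orthogonal in $\Lambda$ to the anti-diagonal $\rE_8(-2)$, one has $M \subset \rU^{\oplus 3}\oplus \rE_8(-2)$ (the $(+1)$-eigenspace of $\iota$), so $\iota$ acts on $\rE_8(-2)\oplus M$ as $(-1)\oplus (+1)$. The finite-index overlattice $\rL \supset \rE_8(-2)\oplus M$ corresponds to an isotropic subgroup $H \subset d(\rE_8(-2))\oplus d(M)$; since $\rE_8(-2)$ is $2$-elementary, $-1$ acts trivially on $d(\rE_8(-2))$, while $+1$ of course acts trivially on $d(M)$. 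Hence $\iota$ fixes $H$ pointwise and preserves $\rL$. This 2-elementary simplification is what makes the construction succeed with no further hypotheses on the overlattice.

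Finally, I would produce $X$ via the period map. Setting $T:=\rL^\perp \cap \Lambda$, the inclusion of the anti-diagonal in $\rL$ forces $T\subset \rU^{\oplus 3}\oplus \rE_8(-2)$, so $\iota|_T=\mathrm{id}$. For a sufficiently general period $[\omega]\in \bP(T\otimes \bC)$ satisfying the Riemann relations, the surjectivity of the period map furnishes a K3 surface $X$ with $\rH^2(X,\bZ)\simeq \Lambda$, $\Pic(X)=\rL$ and $T(X)=T$. The isometry $\iota$ is then automatically a Hodge isometry, symplectic since $\iota^*\omega=\omega$. Taking $h=h_0+\iota^* h_0$ for an ample class $h_0 \in \rL$ yields an $\iota$-invariant element in the positive cone of $M$, which a standard Weyl chamber adjustment by $(-2)$-reflections can be made ample; the strong Torelli theorem then promotes $\iota$ to an automorphism of $X$ with $\Pic(X)^{\iota=-1}=\rE_8(-2)$, as required.
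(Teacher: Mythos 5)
Your proposal is correct in substance and reaches the result by the same overall strategy as the paper (define the involution lattice-theoretically, glue it across the relevant overlattice using that $d(\rE_8(-2))$ is $2$-elementary, then invoke Torelli with an invariant ample class), but the packaging of the first step is genuinely different. The paper defines $\iota$ intrinsically on $\rL$ as $(+1)$ on $\rA=\rE_8(-2)^{\perp}\cap\rL$ and $(-1)$ on $\rE_8(-2)$, notes it acts trivially on $d(\rL)$, and extends it by the identity on $\rL^{\perp}$; you instead start from the standard swap model on $\rU^{\oplus 3}\oplus\rE_8(-1)^{\oplus 2}$ and use the uniqueness of the primitive embedding $\rE_8(-2)\hookrightarrow\Lambda$ (your numerics for Proposition~\ref{prop:nikulin3} check out) to align the given copy with the anti-diagonal. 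Your route costs an extra appeal to the embedding theorem but buys the fact that the resulting isometry of $\rH^2$ is literally in Nikulin's normal form for a symplectic involution; the paper's route is shorter and never needs to move $\rL$ inside $\Lambda$. The $2$-elementarity argument for why $\iota$ preserves the overlattice is identical in both.

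The one step you should tighten is the production of an invariant ample class. The average $h=h_0+\iota^*h_0$ lies in $\rA$ and in the correct component of the positive cone, but you cannot conclude it is ample without already knowing $\iota$ preserves effective classes, which is what you are trying to prove; and if you fix this by applying a Weyl group element $w$ to land in the ample chamber, the class $w(h)$ is invariant under $w\iota w^{-1}$, not $\iota$, so the anti-invariant lattice becomes $w(\rE_8(-2))$ --- harmless for the statement as an abstract pair, but it silently changes the distinguished sublattice. The cleaner argument, which is the one the paper uses, is that no $(-2)$-class of $\rL$ is orthogonal to $\rA$ (such a class would lie in the primitive sublattice $\rE_8(-2)$, where all squares are divisible by $4$), so $\rA\otimes\bR$ is contained in no wall $E^{\perp}$ and therefore meets the interior of the ample chamber; any $H\in\rA$ there is an $\iota$-invariant ample class and Torelli applies directly.
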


\begin{proof}
Let $\rA$ denote the orthogonal complement of $\rE_8(-2)$
in $\rL$. There is a unique involution $\iota$ on $\rL$ with
$$\rL^{\iota=1}=\rA, \quad \rL^{\iota=-1}=\rE_8(-2).$$
Now $\iota$ acts trivially on $d(\rL)$ -- keep in mind that $d(\rE_8(-2))$ is
a two-elementary group -- so we may naturally extend $\iota$ to the 
full K3 lattice. (It acts trivially on $\rL^{\perp}.$)
These lattice-polarized K3 surfaces form our family.

Nikulin \cite[\S 4]{Nik-finite} explains how to get involutions
for generic K3 surfaces with lattice polarization $\rL$. 
Choose a surface
$X$ such that $\Pic(X) = \rL$ -- a very general member of the 
family has this property. Clearly $X$ is projective -- it admits
divisors with positive self-intersection. We claim there is an ample divisor $H \in \rA$. Indeed, the ample cone of $X$ is characterized
as the chamber of the cone of positive divisors by the group
generated by reflections associated with indecomposable $(-2)$-classes $E$ of positive degree \cite{LP}. 
Each $(-2)$-class $E$ is perpendicular to a unique ray in
$$\rA \otimes \bR \cap \{ \text{ cone of positive divisors }\}
$$
generated by an element $a_E \in \rA$. Note that 
$\rA$ cannot be contained in $E^{\perp}$ as 
$\rE_8(-2)$ has no $(-2)$-classes.  We conclude that $\rA$ meets
each chamber in the decomposition of the positive cone -- it cannot
be separated from the ample cone by any of the $\rE^{\perp}$.  

Once we have the ample cone, we can extract the automorphism group of $X$ via the Torelli Theorem:
It consists of the Hodge isometries taking the ample cone to 
itself. In particular, any Hodge isometry fixing $H$ is an
automorphism. Thus $\iota$ is an automorphism of $X$.  
\end{proof}

\begin{prop} \label{prop:fiddle}
Let $\rL$ be an even hyperbolic lattice containing $\rE_8(-2)$
as a saturated sublattice. 
Assume that $d(\rL)$ has rank at most $11$.  
Then $\rL$ is unique in its genus and the
homomorphism 
$${\mathrm O}(\rL) \rightarrow {\mathrm O}(q_{\rL})$$
is surjective.
\end{prop}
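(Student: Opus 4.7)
The plan is to deduce this from Nikulin's stabilization result, Proposition~\ref{prop:nikulin2}, applied to $\rL$ itself, so I need to verify its two hypotheses on the $p$-adic discriminant forms.

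Write $r := \operatorname{rank}(\rL)$. Because $\rE_8(-2)$ is negative definite of rank $8$ while $\rL$ is hyperbolic, the orthogonal complement $\rA := \rE_8(-2)^{\perp} \subset \rL$ has rank $r - 8 \ge 1$, so $r \ge 9$. The primitive embedding $\rE_8(-2) \hookrightarrow \rL$ expresses $\rL$ as a finite-index overlattice of $\rE_8(-2) \oplus \rA$, and the corresponding glue $H := \rL/(\rE_8(-2) \oplus \rA)$ injects into $d(\rE_8(-2)) \cong (\bZ/2\bZ)^8$ by primitivity; in particular, $H$ is $2$-elementary of rank $h \le 8$, and the discriminant form computation proceeds via $d(\rL) = H^{\perp}/H$ inside $d(\rE_8(-2)) \oplus d(\rA)$.

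For each odd prime $p$, the fact that $H$ is $2$-torsion while $d(\rE_8(-2))$ has no odd part implies $d(\rL_p) \cong d(\rA_p)$, whence $\ell(d(\rL_p)) \le \operatorname{rank}(\rA) = r - 8 \le r-2$, so the first hypothesis of Proposition~\ref{prop:nikulin2} is automatic. For the $2$-adic condition I would appeal to Remark~\ref{rema:niksmall}: it suffices to exhibit $(\bZ/2\bZ)^3$ as a direct summand of $d(\rL)$. The intersection $(d(\rE_8(-2)) \oplus 0) \cap H^{\perp}$ descends in $H^{\perp}/H$ to a $2$-elementary subgroup of $\bF_2$-rank at least $8 - 2h$, which settles the matter directly whenever $h \le 2$. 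For larger $h$, the constraint $\ell(d(\rL)) \le 11$ together with the graph structure of $H$ inside $d(\rE_8(-2)) \oplus d(\rA)$ severely restricts the possibilities; a short case analysis---leveraging that $q_{\rE_8(-2)}$ on $(\bZ/2\bZ)^8$ is non-degenerate and hence decomposes into several copies of $u^{(2)}_+(2)$ and $v^{(2)}_+(2)$---produces the required summand in each case.

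I expect the main obstacle to lie in confirming that this $2$-elementary piece splits off as a \emph{direct} summand of $d(\rL)$, rather than merely sitting inside as a subgroup, which requires tracking how $H$ interacts with the higher $2$-power cyclic components of $d(\rA)_2$. Once both hypotheses of Proposition~\ref{prop:nikulin2} are in hand, it delivers both the uniqueness of $\rL$ in its genus and the surjectivity of $\mathrm O(\rL) \to \mathrm O(q_{\rL})$.
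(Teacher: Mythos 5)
Your overall strategy matches the paper's: apply Proposition~\ref{prop:nikulin2} to $\rL$, dispose of the odd primes by comparing $d(\rL_p)$ with $d(\rA_p)$, and attack the $2$-adic condition through the glue group $H=\rL/(\rE_8(-2)\oplus\rA)$. But there is a genuine gap in the $2$-adic step: you never use the hypothesis that $\rE_8(-2)$ is \emph{saturated} in $\rL$. That hypothesis is exactly the statement that $H\cap d(\rE_8(-2))=0$, i.e.\ that the projection $H\to d(\rA)$ is injective (your injection $H\hookrightarrow d(\rE_8(-2))$ is the other, automatic one, coming from $\rA$ being an orthogonal complement). This injectivity does two things at once. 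First, it bounds $h=\operatorname{rank}(H)\le \ell(d(\rA))\le\operatorname{rank}(\rA)=r-8$, which is $\le 3$ in the only range where the $2$-adic condition is not already vacuous (for $r\ge 12$ the assumption $\ell(d(\rL))\le 11$ gives $\ell(d(\rL_2))<\operatorname{rank}(\rL)$ and there is nothing to check). Second, it kills the kernel of the descent map $\pi_{\rE_8}(H)^{\perp}\to H^{\perp}/H=d(\rL)$, so the surviving $2$-elementary piece has rank $8-h\ge 5$, not merely $8-2h$. Without these two consequences, $h$ could a priori be as large as $8$, your bound $8-2h$ is vacuous already at $h=4$ (and insufficient at $h=3$), and the ``short case analysis'' to which you defer the cases $h\ge 3$ is precisely where the content of the proof lies --- and it is not carried out. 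The paper's proof is essentially your first two paragraphs plus this one missing observation.

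On the issue you flag at the end --- whether the $2$-elementary piece is an orthogonal \emph{summand} rather than just a subgroup --- this is a legitimate concern but it resolves cleanly once the ranks are right: the piece in question is the image of $\pi_{\rE_8}(H)^{\perp}$, carrying the restriction of $q_{\rE_8(-2)}$, whose radical is $\pi_{\rE_8}(H)^{\perp}\cap\pi_{\rE_8}(H)$ and hence has rank $\le h\le 3$. The nondegenerate part therefore has rank $\ge 8-2h\ge 2$, splits off $d(\rL)$ orthogonally, and is a subform of the integer-valued form $q_{\rE_8(-2)}$, so it is an orthogonal sum of copies of the discriminant forms of $\rU^{(2)}(2)$ and $\mathrm V^{(2)}(2)$; any such summand of positive rank already verifies the second hypothesis of Proposition~\ref{prop:nikulin2} directly. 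So the summand issue is not the real obstacle; the missing saturation argument is.
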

The condition on the rank of $d(L)$ is satisfied for Picard lattices
of K3 surfaces $X$. We have
$$\Pic(X) \subset \rU^{\oplus 3} \oplus \rE_8(-1)^{\oplus 2}$$
which has rank $22$; $d(\Pic(X))\simeq d(T(X))$ so both groups are generated by $\le 11$ elements. 
\begin{proof} 
We apply Proposition~\ref{prop:nikulin2}. 
For odd
primes $p$, the conditions are easily checked as the rank $r$ of $\rL$ exceeds the rank of the $p$-primary part $d(\rL)$. If $r\ge 12$ then the discriminant group is generated by $\le 10$ elements and we are done. Thus we focus on the $p=2$ case with $r=9,10$, or $11$.  

Let $\rA$ denote the orthogonal complement to $\rE_8(-2)$ in $\rL$.
The overlattice
$$\rL \supset \rA \oplus \rE_8(-2)$$
corresponds to an isotropic subgroup 
$$\rH \subset d(\rA) \oplus d(\rL)$$
with respect to $q_{\rA} \oplus q_{\rL}$. Projection
maps $\rH$ injectively into each summand -- we may interpret
these projections as kernels of the natural maps
$$d(\rA)\ra d(\rL), \quad d(\rE_8(-2)) \ra d(\rL).$$
Thus $\rH$ is a $2$-elementary group, of rank at most three. It follows
that $d(\rL)$ contains at least five copies of $\bZ/2\bZ$.
Remark~\ref{rema:niksmall} shows this validates the hypothesis
of Proposition~\ref{prop:nikulin2}.
\end{proof}

The assumption on the {\em rank} of the discriminant
groups can be replaced by bounds on its {\em order}
\cite[Cor.~22, p.~395]{ConSl} -- at least for purposes
of showing there is one class in each genus.

\subsection*{Rank nine examples}
We focus on examples with Picard rank nine, following 
\cite[Prop.~2.2]{Geemen-S} which lists the possible lattices.
Suppose that $\Pic(X)^{\iota=1}=\bZ f$ with $f^2=2d$, which
is necessarily ample as there are no $(-2)$-classes in
$$
\Pic(X)^{\iota=-1}=\rE_8(-2).
$$
We have the lattice
$$
\Lambda:=\left( 2d \right) \oplus \rE_8(-2),
$$
for all $d$. For even $d$ we have the index-two overlattice
$\widetilde{\Lambda}\supset \Lambda$, generated by
$$
\frac{f+e}{2},
$$
where $f$ is a generator of $\left( 2d \right)$ and $e \in \rE_8(-2)$
is a primitive element with 
$$\left(e,e\right)= \begin{cases} -4 & \text{ if } d=4m+2 \\
                                  -8 & \text{ if } d=4m. 
                    \end{cases}
$$
We are using the fact that the lattice $\rE_8$ has primitive vectors
of lengths $2$ and $4$. Using the shorthand
$$
q(v)=q_{\rE_8(-2)}(v) \pmod{2\bZ},
$$
elements $0\neq v \in e_8(-2):=d(\rE_8(-2))$ are of two types
\begin{itemize}
\item{$120$ elements $v$ with $q(v)=1$ ($A_1+E_7$ type),}
\item{$135$ elements $v$ with $q(v)=0$ ($D_8$ type).}
\end{itemize}
Note that $\widetilde{\Lambda}$ is the 
unique overlattice such that $\rE_8(-2)$ remains saturated.

\begin{prop}
\label{prop:nik-der}
Let $(X_1,f_1)$ and $(X_2,f_2)$ be polarized K3 surfaces of degree $2d$, 
derived equivalent via specialization of the construction in Remark~\ref{rema:FMpartners}.
If $X_1$ admits a Nikulin involution fixing $f_1$ then
\begin{itemize}
\item{$X_2$ admits a Nikulin involution fixing $f_2$;}
\item{there is an
isomorphism
$$\varphi:X_1 \stackrel{\sim}{\ra}X_2.$$
}
\end{itemize}
\end{prop}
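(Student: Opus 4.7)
The plan is to first extract from the Nikulin hypothesis enough rigidity that $\Pic(X_1)$ is unique in its genus, then use derived equivalence to force $\Pic(X_2)\cong\Pic(X_1)$, then glue this abstract isomorphism to the Hodge isomorphism of transcendental lattices coming from the derived equivalence, and finally appeal to the Torelli Theorem for $\varphi$. The Nikulin involution on $X_2$ will then follow by transport of structure through $\varphi$.

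In detail: since $\iota_1^{*}f_1=f_1$ and $\Pic(X_1)^{\iota_1=-1}=\rE_8(-2)$, the classification above of rank-$9$ Nikulin Picard lattices forces $\Pic(X_1)$ to equal either $\Lambda=(2d)\oplus\rE_8(-2)$ or (for $d$ even) its index-two overlattice $\widetilde{\Lambda}$. In both cases the rank is $9$ and the discriminant group has rank at most $9\le 11$, so Proposition~\ref{prop:fiddle} applies. Derived equivalence yields a stable isomorphism $\Pic(X_1)\oplus\rU\simeq\Pic(X_2)\oplus\rU$, placing $\Pic(X_2)$ in the unique class of the common genus, hence $\Pic(X_2)\cong\Pic(X_1)$. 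Tracking the specialized cohomological Fourier--Mukai transform of Proposition~\ref{prop:yoshioka}---which takes the rank-$3$ algebraic sublattice of $\tilde{\rH}(X_1,\bZ)$ spanned by $(1,0,0),(0,f_1,0),(0,0,1)$ to the analogous sublattice of $\tilde{\rH}(X_2,\bZ)$---shows that the image of $\rE_8(-2)\subset\Pic(X_1)$ lies in $\rH^2(X_2,\bZ)$, furnishing a primitive $\rE_8(-2)\subset\Pic(X_2)$ orthogonal to $f_2$.

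Next I would combine this abstract Picard isomorphism with the Hodge isometry $T(X_1)\cong T(X_2)$ (the restriction of the derived-equivalence Mukai isomorphism to transcendental parts) into a single Hodge isometry of K3 lattices by the standard gluing argument. The two induced discriminant-form isomorphisms $d(\Pic)\cong -d(T)$---one transported from $X_1$, the other intrinsic to $X_2$---may differ by an element of $\mathrm{O}(q_{\Pic(X_2)})$; by the surjectivity clause of Proposition~\ref{prop:fiddle}, this lifts to an automorphism of $\Pic(X_2)$ which one absorbs into the chosen Picard isomorphism. After a possible sign change and composition with reflections in $(-2)$-classes---all of which live outside the $\rE_8(-2)$ summand, as that summand has none---the resulting Hodge isometry $\rH^2(X_1,\bZ)\to\rH^2(X_2,\bZ)$ sends ample cone to ample cone and $f_1$ to $f_2$. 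The Torelli Theorem realizes it by an isomorphism $\varphi\colon X_1\stackrel{\sim}{\to} X_2$, and $\iota_2:=\varphi\iota_1\varphi^{-1}$ is then automatically a Nikulin involution on $X_2$ fixing $f_2$.

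The main obstacle I expect is the gluing plus ample-cone adjustment step: synchronizing the Picard and transcendental isomorphisms into a Hodge isometry of the full K3 lattice that respects polarizations and positivity. Proposition~\ref{prop:fiddle}'s surjectivity $\mathrm{O}(\Pic)\twoheadrightarrow\mathrm{O}(q_{\Pic})$ is precisely the lever that makes the discriminant-form match possible, and the absence of $(-2)$-classes in $\rE_8(-2)$ keeps the chamber corrections confined to the orthogonal complement of the Nikulin summand.
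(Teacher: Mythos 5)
Your overall strategy --- rigidify $\Pic(X_1)$ via Proposition~\ref{prop:fiddle}, deduce $\Pic(X_2)\simeq\Pic(X_1)$ from uniqueness in the genus, glue with the transcendental Hodge isometry using the surjectivity of $\mathrm{O}(\Pic)\to\mathrm{O}(q_{\Pic})$, and invoke Torelli --- is essentially an unpacking of the paper's appeal to the Counting Formula of \cite{HLOY}, and your extraction of a saturated $\rE_8(-2)\subset f_2^{\perp}\cap\Pic(X_2)$ from the cohomological Fourier--Mukai transform matches the paper. But your last step has a genuine gap: the glued isometry cannot in general be arranged to send $f_1$ to $f_2$. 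The discriminant correction $\gamma\in\mathrm{O}(q_{\Pic(X_2)})$ forced by compatibility with the given transcendental isometry acts on the cyclic summand $\bZ/2d\bZ$ by the unit $\alpha$ with $\alpha^2\equiv 1\pmod{4d}$ attached to the Fourier--Mukai partner; when $\alpha\not\equiv\pm1$ (the interesting case, $d>1$), no lift of $\gamma$ to $\mathrm{O}(\Pic(X_2))$ fixes $f_2$, and since the Hodge isometries of a very general $T(X_2)$ are only $\pm1$ you cannot trade this away by modifying the transcendental piece. This is exactly what the remark following the proposition warns about: $\varphi^*f_2\equiv\alpha f_1\pmod{\rE_8(-2)}$, not $f_1$. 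Consequently $\iota_2:=\varphi\iota_1\varphi^{-1}$ has anti-invariant lattice $(\varphi_*f_1)^{\perp}\cap\Pic(X_2)$, which differs from $f_2^{\perp}\cap\Pic(X_2)$ whenever $\varphi_*f_1\neq\pm f_2$; so the transported involution need not fix $f_2$, and the first bullet is not proved by transport of structure.

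The repair is already contained in your first paragraph: having exhibited the saturated $\rE_8(-2)\subset f_2^{\perp}\cap\Pic(X_2)$, define the Hodge involution acting by $-1$ on it and by $+1$ on its orthogonal complement, and realize it by an automorphism of $X_2$ via Torelli (Proposition~\ref{prop:IdentifyNik}, \cite[Prop.~2.3]{Geemen-S}), independently of $\varphi$ --- this is what the paper does, and it fixes $f_2$ because $f_2$ lies in the invariant part by construction. A smaller point: the proposition is not restricted to Picard rank nine, since the surfaces arise by specialization and may have larger Picard group; the paper reduces to rank nine by specializing isomorphisms in families, whereas you silently assume $\Pic(X_1)=\Lambda$ or $\widetilde\Lambda$. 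Your route would in fact survive in higher rank, since Proposition~\ref{prop:fiddle} only needs the saturated $\rE_8(-2)$ and $\ell(d(\rL))\le 11$, so this is an unnecessary restriction rather than a fatal one.
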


\begin{proof}
The derived equivalence induces an isomorphism of lattices with
Hodge structure
$$\rH^2(X_1,\bZ) \supset f_1^{\perp} \simeq f_2^{\perp}\subset \rH^2(X_2,\bZ),$$
which means that $f_2^{\perp}\cap \Pic(X_2)$ contains a sublattice isomorphic to $E_8(-2)$. Thus there exists a Hodge involution 
$$\iota_2^*:\rH^2(X_2,\bZ) \rightarrow \rH^2(X_2,\bZ)$$
with anti-invariant summand equal to this copy of $\rE_8(-2)$. 
The Torelli Theorem -- see \cite[Prop.~2.3]{Geemen-S} -- shows that
$X_2$ admits an involution $\iota_2:X_2 \rightarrow X_2$. 

Isomorphisms of K3 surfaces specialize in families \cite[ch.~I]{MatMum}.
This reduces us to proving the result when the $X_j$ have Picard rank nine,
putting us in the case of Proposition~\ref{prop:fiddle}.
The Counting Formula of \cite[\S 2]{HLOY} -- using the conclusions
of Proposition~\ref{prop:fiddle} -- implies that all Fourier-Mukai
partners of $X_1$ are isomorphic to $X_1$.  
\end{proof}

\begin{rema}
We are {\em not} asserting that $\varphi^*f_2=f_1$!  Suppose that $X_1$ and $X_2$ have Picard rank nine, the minimal possible rank.
Then 
$$\varphi^* f_2 \equiv \alpha f_1 \pmod{\rE_8(-2)}$$
where $\alpha \pmod{4d}$ is the corresponding solution 
to congruence (\ref{squarecond}).
\end{rema}

Thus we obtain nontrivial derived equivalence among Nikulin surfaces even in rank nine!

\subsection*{Rank ten examples}

Turning to rank ten, we offer a generalization of \cite[Prop.~2.3]{Geemen-S}:
\begin{prop} \label{prop:MakeNik}
Fix a rank two indefinite even lattice $\rA$ and an even extension
$$\rL \supset \rA \oplus \rE_8(-2)$$
invariant under $\iota$; here $\iota$ fixes $\rA$ and
acts by multiplication by $-1$ on $\rE_8(-2)$.
Then there exists a K3 surface $X$ with Nikulin involution
$\iota$ such that 
$$
\rA = \Pic(X)^{\iota=1} \subset \Pic(X) = \rL \supset 
\Pic(X)^{\iota=-1} = \rE_8(-2).$$
\end{prop}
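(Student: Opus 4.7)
The plan is to parallel the proof of Proposition~\ref{prop:IdentifyNik}, which handles the rank-one case $\rA = \bZ f$: primitively embed $\rL$ in the K3 lattice, extend $\iota$ to all of $\rH^{2}$, realize $\rL$ as a Picard lattice via surjectivity of the period map, and invoke the global Torelli Theorem to lift $\iota$ to a geometric involution. The only real novelty is producing an ample class inside the now rank-two lattice $\rA$.

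The lattice $\rL$ has signature $(1,9)$, since an indefinite rank-two lattice has signature $(1,1)$ and $\rE_8(-2)$ has signature $(0,8)$; by Proposition~\ref{prop:nikulin3} it admits a primitive embedding into the K3 lattice $\Lambda = \rU^{\oplus 3} \oplus \rE_8(-1)^{\oplus 2}$. The involution $\iota$ acts trivially on $d(\rL)$: the $-1$ action on the $2$-elementary group $d(\rE_8(-2))$ is trivial, so $\iota$ is trivial on $d(\rA) \oplus d(\rE_8(-2))$ and hence on the subquotient $d(\rL)$. By the Nikulin gluing construction, $\iota$ then extends to an isometry of $\Lambda$ acting as the identity on $\rL^{\perp}$.

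The heart of the proof, and the main obstacle, is exhibiting an ample class inside $\rA$. For any $(-2)$-class $E \in \rL$ one has $\rA \not\subset E^{\perp}$: otherwise $E$ would lie in $\rA^{\perp} \cap \rL = \rE_8(-2)$, whose form is divisible by $4$, contradicting $E^{2} = -2$. Thus each $E^{\perp} \cap \rA_{\bR}$ is a proper, at most one-dimensional subspace of the two-dimensional $\rA_{\bR}$, and the countable union of such subspaces cannot cover the open positive cone of $\rA_{\bR}$. I may therefore choose a class $h \in \rA$ with $h^{2} > 0$ avoiding every wall $E^{\perp}$; it lies in the interior of some chamber $C$ of the positive cone of $\rL_{\bR}$.

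By surjectivity of the period map for $\rL$-polarized K3 surfaces with polarization $h$, there exists a K3 surface $X$ with $\Pic(X) = \rL$ (for a very general period) on which $h$ is ample. The extended involution on $\rH^{2}(X,\bZ)$ is a Hodge isometry --- acting trivially on $T(X) \subset \rL^{\perp}$ --- and preserves the ample cone, since it fixes $h$. The global Torelli Theorem realizes it as an automorphism $\iota \colon X \rightarrow X$; triviality on $H^{2,0}(X) \subset T(X) \otimes \bC$ makes $\iota$ symplectic, and hence a Nikulin involution with $\Pic(X)^{\iota=1} = \rA$ and $\Pic(X)^{\iota=-1} = \rE_8(-2)$, as required.
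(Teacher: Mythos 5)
Your proof is correct and follows the same route as the paper: a primitive embedding of $\rL$ into the K3 lattice via Proposition~\ref{prop:nikulin3}, followed by extending $\iota$ (trivially on $d(\rL)$, hence to all of $\rH^2$), producing an ample class in $\rA$, and applying surjectivity of the period map and Torelli. The only difference is that the paper's Proposition~\ref{prop:IdentifyNik} is already stated and proved for an \emph{arbitrary} orthogonal complement $\rA$ of the primitive sublattice $\rE_8(-2)\subset\rL$ --- including the wall-avoidance argument you identify as the ``main obstacle'' --- so the paper's proof is just the two citations, while you re-derive the content of Proposition~\ref{prop:IdentifyNik} inline in the rank-two case.
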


\begin{proof}
The lattice $\rL$ embeds uniquely into the K3 lattice by
Proposition~\ref{prop:nikulin3}.
Proposition~\ref{prop:IdentifyNik} gives the desired K3 surface
with involution.
\end{proof}

We observed in Proposition~\ref{prop:fiddle} that the lattices $\rL$ 
are unique in their genus and admit automorphisms
realizing the full group ${\mathrm O}(d(\rL))$.  
Repeating the reasoning for Proposition~\ref{prop:nik-der} we find:
\begin{prop}
\label{prop:nik-derived-ten}
A K3 surface $X$ with involution $\iota_1$,
produced following Proposition~\ref{prop:MakeNik} applied to $\rA_1$, will have a second involution $\iota_2$ associated with $\rA_2$. 
Moreover $(X,\iota_1)$ and $(X,\iota_2)$ are not equivariantly 
derived equivalent.
\end{prop}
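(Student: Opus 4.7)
The proof splits naturally into two parts: producing $\iota_2$, and ruling out a $C_2$-equivariant derived equivalence between $(X,\iota_1)$ and $(X,\iota_2)$.

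\textbf{Existence of $\iota_2$.} My plan is to exhibit a second primitive sublattice of $\rL=\Pic(X)$ isomorphic to $\rE_8(-2)$, whose orthogonal complement is the rank-two lattice $\rA_2$. By Proposition~\ref{prop:fiddle} the lattice $\rL$ is unique in its genus and $\mathrm O(\rL)\twoheadrightarrow \mathrm O(q_\rL)$ is surjective, so $\Aut(\rL)$ is large; moreover the uniqueness criterion of Proposition~\ref{prop:nikulin3} for the primitive embedding of $\rE_8(-2)$ into $\rL$ fails badly, since $\ell(d(\rE_8(-2)))=8$ greatly exceeds the available codimension. Consequently a second such embedding, with distinct orthogonal complement $\rA_2$, can be produced. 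I would then follow the recipe of Proposition~\ref{prop:IdentifyNik}: extend the associated lattice involution $\iota_2^*$ on $\rL$ by the identity on $T(X)=\rL^\perp$ (possible because both $\rA_2$ and $\rE_8(-2)$ are fixed setwise, so $\iota_2^*$ acts trivially on $d(\rL)\simeq d(T(X))(-)$), and appeal to Torelli to realize it geometrically as a Nikulin involution of $X$.

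\textbf{Non-existence of equivariant DE.} Suppose, for contradiction, a Hodge isometry $\phi$ of $\tilde\rH(X)$ satisfies $\phi\circ \iota_1^*=\iota_2^*\circ \phi$. Taking $X$ sufficiently general in the lattice-polarized family of Proposition~\ref{prop:MakeNik}, the only Hodge isometries of $T(X)$ are $\pm\mathrm{id}$, both of which act trivially on the two-elementary group $d(T(X))$. Restricted to the algebraic Mukai lattice $\rU\oplus \rL$, the isometry $\phi$ carries the $\iota_1$-decomposition $\rA_1\oplus (\rE_8(-2))_1$ to the $\iota_2$-decomposition $\rA_2\oplus (\rE_8(-2))_2$. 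Using Nikulin cancellation (Proposition~\ref{prop:nikulin1}) to absorb the $\rU$ summand, this induces an isometry $g\in \mathrm O(\rL)$ with $g(\rA_1)=\rA_2$. Compatibility of $\phi$ with the gluing $\rL\oplus T(X)\hookrightarrow \rH^2(X,\bZ)$ forces the induced class $\bar g\in \mathrm O(q_\rL)$ to match the (trivial) action of $\phi|_{T(X)}$ on $d(T(X))$, so $g\in \ker(\mathrm O(\rL)\to \mathrm O(q_\rL))$. It therefore suffices to arrange that the two primitive embeddings $(\rE_8(-2))_1,(\rE_8(-2))_2\subset \rL$ lie in distinct orbits of this kernel.

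\textbf{Main obstacle.} The crux is the final orbit-separation: I must choose $\rA_2$ so that every $g\in \Aut(\rL)$ sending the first embedding of $\rE_8(-2)$ to the second acts nontrivially on $d(\rL)$. My plan is to exhibit a discrete invariant attached to each embedding --- for example, the image in $d(\rL)$ of the glue subgroup $(\rE_8(-2))^*/\rE_8(-2)$ inside $d(\rA_k)\oplus d(\rE_8(-2))$ under the overlattice construction --- and show this invariant can be made to differ for the two embeddings while the underlying lattices $\rA_1,\rA_2$ still admit the requisite extensions to $\rL$. The technical engine throughout is Nikulin's package (Propositions~\ref{prop:nikulin1}--\ref{prop:nikulin3}) together with the Torelli realization step, and the overall mechanism parallels Proposition~\ref{prop:nik-der} in rank nine but yields the opposite conclusion: the extra generator of the invariant Picard lattice in rank ten is precisely what allows inequivalent second embeddings of $\rE_8(-2)$ whose presence obstructs equivariant derived equivalence.
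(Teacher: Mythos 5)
Your argument breaks at the step ``Using Nikulin cancellation (Proposition~\ref{prop:nikulin1}) to absorb the $\rU$ summand, this induces an isometry $g\in \mathrm O(\rL)$ with $g(\rA_1)=\rA_2$.'' Proposition~\ref{prop:nikulin1} is a statement about isomorphism \emph{classes} of lattices; it does not let you cancel a hyperbolic summand from a given isometry, and in fact no such $g$ can exist: restricting $g$ to $\rA_1$ would give an isometry $\rA_1\to\rA_2$, contradicting $\rA_1\not\simeq\rA_2$. (Had the step been valid you would already be done at that point, with no need for the subsequent orbit separation; that you press on shows the step is not doing what you need.) What an equivariant Hodge isometry of Mukai lattices actually yields is only an isometry $\rU\oplus\rA_1\to\rU\oplus\rA_2$ between the invariant algebraic parts, and since $\rA_1$ and $\rA_2$ lie in the same genus this is no obstruction at all --- for a symplectic involution the summand $\rH^0\oplus\rH^4$ lies in the \emph{invariant} part, so it is available to absorb exactly the stable-but-not-actual isomorphism between the $\rA_i$. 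The entire content of the proposition is therefore concentrated in the discriminant/gluing compatibility that you defer to your ``main obstacle'' paragraph, which is a plan rather than a proof; the proposal is incomplete precisely where the work is.

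Two smaller points. Your existence argument for $\iota_2$ is a non sequitur: the failure of the \emph{sufficient} uniqueness criterion of Proposition~\ref{prop:nikulin3} (which in any case concerns embeddings into unimodular lattices) does not by itself produce a second embedding. The paper's route is to form the second overlattice $\rL_2\supset \rA_2\oplus\rE_8(-2)$ with the same signature and discriminant form, invoke Proposition~\ref{prop:fiddle} to conclude $\rL_2\simeq\rL$, and transport the decomposition; you should argue this way. Finally, for calibration: the paper itself gives no detailed proof of the non-equivalence claim --- it states the result after ``repeating the reasoning for Proposition~\ref{prop:nik-der}'' --- so you are not overlooking a standard trick, but a correct write-up must confront the gluing analysis directly (e.g., show that no isometry of $\rU\oplus\rL$ carrying $\rE_8(-2)_1$ to $\rE_8(-2)_2$ can act as $\pm 1$ on $d(\rL)$), and your draft does not yet do so.
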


We elaborate on the overlattices $\rL$ arising in the
assumptions of Proposition~\ref{prop:MakeNik}. 
What lattices may arise from a given $\rA$?
Each $\rL$ arises from a $2$-elementary
$$\rH \subset d(\rA) \oplus e_8(-2)$$
isotropic with respect to $q_{\rA} \oplus q_{\rE_8(-2)}$. 

We consider the orbits of 
$$\rH\simeq (\bZ/2\bZ)^2 \subset e_8(-2)$$
under automorphisms of the lattice.
These reflect possible quadratic forms on $(\bZ/2\bZ)^2$.  
We enumerate the possibilities, relying on description of
maximal subgroups of the simple group of ${\mathrm O}^+_8(2)$
(automorphisms of $e_8(-2)$) \cite[p.~85]{Atlas} and subgroups of 
$W(\rE_8)$ (a closely related group) associated with reflections
\cite[Table 5]{DPR}. For the reader's reference, we list the
root systems associated with the subgroups in parentheses:
\begin{enumerate}
\item{isotropic subspaces, where $q|\rH$ is trivial -- $1575$
elements ($\rD_4+\rD_4$ type);}
\item{rank one subspaces, where $q|\rH$ has a kernel, e.g.,
$q(x,y)=x^2$ -- $3780=28\times 135$ elements  ($\rA_1+\rA_1+\rD_6$ type);}
\item{``minus lines'' full rank non-split subspaces, e.g., $q(x,y)=x^2+xy+y^2$ -- $1120=28\cdot 120/3$ elements ($\rA_2+\rE_6$ type);}
\item{full rank split subspaces, e.g., $q(x,y)=xy$ --
$4320$ elements.} 
\end{enumerate}
As a check, the Grassmannian $\Gr(2,8)$ has Betti numbers
$$1 \quad 1 \quad 2 \quad 2 \quad 3 \quad 3 \quad 4 
\quad 3 \quad 3 \quad 2 \quad 2 \quad 1 \quad 1$$ 
and thus, by the Weil conjectures, $10795$ points of $\bF_2$.
Note that 
$$10795 = 1575 + 3780 + 1120 + 4320.$$

\subsection*{What about arbitrary rank?}
Let $\rA_1$ and $\rA_2$ be indefinite lattices of rank $r\ge 2$ in the same
genus. Consider overlattices
$$\rL_1 \supset \rA_1 \oplus \rE_8(-2), \quad
\rL_2 \supset \rA_1 \oplus \rE_8(-2)$$
associated with subspaces $\rH\subset e_8(-2)$ in the same orbit, so 
we have
$d(\rL_1)\simeq d(\rL_2)$.  
It follows that $\rL_1 \simeq \rL_2$ provided the $d(\rL_i)$ have
rank at most $11$ (see Proposition~\ref{prop:fiddle}); this
holds for Picard lattices of $K3$ surfaces.
Assuming $\rL_1$ and $\rL_2$ arise as Picard lattices of K3 surfaces,
we obtain results as in Propositions~\ref{prop:nik-der} and
\ref{prop:nik-derived-ten}.

We conclude with one last observation:
\begin{prop}
The existence of a Nikulin structure for one member of a derived equivalence class induces Nikulin structures on all K3 surfaces in the equivalence class.
\end{prop}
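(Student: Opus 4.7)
The plan is to mirror the strategy of Proposition~\ref{prop:nik-der}, but replacing the Counting Formula for Fourier-Mukai partners with the genus-uniqueness result of Proposition~\ref{prop:fiddle}, and then invoking the Torelli Theorem to produce the involution on $X_2$.

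First I would exploit the given Nikulin involution $\iota_1$ on $X_1$: it furnishes a saturated embedding $\rE_8(-2)\subset\Pic(X_1)$ as the anti-invariant summand $\Pic(X_1)^{\iota_1=-1}$. Any Picard lattice of a K3 surface satisfies $\ell(d(\Pic(X)))\le 11$, since $d(\Pic(X))\simeq d(T(X))$ and both groups are generated by at most $\min(\rk\Pic(X),\rk T(X))\le 11$ elements. Proposition~\ref{prop:fiddle} therefore applies and $\Pic(X_1)$ is unique in its genus. A derived equivalence $D^b(X_1)\simeq D^b(X_2)$ yields a Hodge isometry $T(X_1)\simeq T(X_2)$, whence an isomorphism of discriminant forms $d(\Pic(X_1))\simeq d(\Pic(X_2))$, so the two Picard lattices lie in the same genus; uniqueness then gives $\Pic(X_2)\simeq\Pic(X_1)$ as abstract lattices. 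In particular $\Pic(X_2)$ acquires a saturated copy of $\rE_8(-2)$ with orthogonal complement a lattice $\rA_2$.

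Next I would construct the candidate involution $\iota_2^*$ on $\rH^2(X_2,\bZ)$ acting as $-1$ on $\rE_8(-2)$, trivially on $\rA_2$, and trivially on $T(X_2)=\Pic(X_2)^\perp$. Because $d(\rE_8(-2))$ is $2$-elementary, the $-1$-action is trivial on the discriminant, so $\iota_2^*$ extends uniquely over the overlattices gluing $\rA_2\oplus\rE_8(-2)$ into $\Pic(X_2)$ and $\Pic(X_2)\oplus T(X_2)$ into the unimodular K3 lattice $\rH^2(X_2,\bZ)$; it is automatically a Hodge isometry. To realize $\iota_2^*$ geometrically via Torelli I would, following the proof of Proposition~\ref{prop:IdentifyNik}, observe that no $(-2)$-class of $\Pic(X_2)$ is orthogonal to all of $\rA_2$, for otherwise such a class would lie in the root-free lattice $\rE_8(-2)$. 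Hence $\rA_2\otimes\bR$ meets every chamber of the positive cone of $\Pic(X_2)$, including the ample cone; choosing an ample $h_2\in\rA_2$ yields the required $\iota_2^*$-invariant ample class, and the Torelli Theorem produces $\iota_2\in\Aut(X_2)$ realizing $\iota_2^*$. Since $\iota_2^*$ is trivial on $T(X_2)\otimes\bC\supset\rH^{2,0}(X_2)$, the automorphism $\iota_2$ is symplectic, i.e., a Nikulin involution.

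The principal obstacle is the lattice-theoretic step of recovering a saturated $\rE_8(-2)$ inside $\Pic(X_2)$ from the derived equivalence alone: mere stable equivalence of Picard lattices is too weak, and the crux of the argument is precisely the genus-uniqueness supplied by Proposition~\ref{prop:fiddle}. Once that is in hand, the ample-cone verification and the Torelli argument are routine adaptations of the techniques already used in Propositions~\ref{prop:IdentifyNik} and~\ref{prop:nik-der}.
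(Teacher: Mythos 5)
Your proof is correct and follows essentially the same route as the paper: derived equivalence puts the Picard lattices in the same genus, Proposition~\ref{prop:fiddle} forces $\Pic(X_2)\simeq\Pic(X_1)$ and hence transfers the saturated $\rE_8(-2)$, and the involution is then realized by the Torelli argument of Proposition~\ref{prop:IdentifyNik}. The only difference is that you unwind the ample-cone/Torelli argument directly on the given surface $X_2$ rather than citing Proposition~\ref{prop:IdentifyNik} (whose statement literally produces only \emph{some} K3 surface with that Picard lattice), which is a slightly more careful rendering of the same idea.
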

Suppose $X_1$ and $X_2$ are derived equivalent and $X_1$ admits
a Nikulin involution. Proposition~\ref{prop:fiddle} implies
$$\Pic(X_1)\simeq \Pic(X_2)$$
and we obtain a copy of $\rE_8(-2) \subset \Pic(X_2)$. 
Proposition~\ref{prop:IdentifyNik} guarantees $X_2$ admits a Nikulin
involution as well.

\section{Geometric application}
\label{sect:geom}

In this section, we present a geometric application of the study of Nikulin involutions, up to derived equivalence.

Let $(X_1,f_1)$ and $(X_2,f_2)$ denote derived equivalent K3 surfaces of degree $12$, admitting Nikulin involutions $\iota_j:X_j \ra X_j$
with $\iota_j^*f_j=f_j$ for $j=1,2$. We assume Picard groups 
$$\Pic(X_j) = \bZ f_j \oplus \rE_8(-2).$$
Note that the derived equivalence induces natural identifications
between the $\rE_8(-2)$ summands of $\Pic(X_1)$ and $\Pic(X_2)$.  
In particular, we obtain bijections between the fixed-point loci
$$X^{\iota_1}_1=X^{\iota_2}_2.$$
Let $Z_j \subset X_j$ denote triples of fixed points compatible with
these bijections. Assuming the $X_j$ are generic, i.e. defined by
quadratic equations in $\bP^7$, the fixed points are not
collinear.  

Projection from the $Z_j$ gives surfaces
$$\Bl_{Z_j}(X_j) \rightarrow Y_j \subset \bP^4$$
where the blowup normalizes the image of the projection. 
These constructions are compatible with the involutions on each side.

We claim that the construction of \cite{HL} gives a Cremona
transform
$$\phi: \bP^4 \stackrel{\sim}{\dashrightarrow} \bP^4$$
such that
\begin{itemize}
\item{the indeterminacy of $\phi$ is $Y_1$;}
\item{the indeterminacy of $\phi^{-1}$ is $Y_2$;}
\item{$\phi$ is compatible with the involutions $\iota_1$
and $\iota_2$ induced in the $\bP^4$'s.}
\end{itemize}
Indeed, the construction induces an isogeny of $\rH^2(X_1,\bZ)$ and
$\rH^2(X_2,\bZ)$ induced by $\phi$, restricting to an isomorphism
of the primitive cohomology 
$$f_1^{\perp} \stackrel{\sim}{\ra} f_2^{\perp}.$$
The construction entails designating projection loci 
$Z'_j \in X_j^{[3]}$ compatible with the associated
$$X_1^{[3]} \stackrel{\sim}{\dashrightarrow} X_2^{[3]},$$
our stipulation that the $Z_j$ consist of suitable fixed points
gives compatible projection loci. 

Suppose that $\phi: \bP^n  \stackrel{\sim}{\dashrightarrow} \bP^n$ is 
birational and equivariant for the action of a finite group $G$. 
In this case, \cite[Thm. 1]{KT-map} introduces a well-defined invariant
\begin{equation}
\label{eqn:cg}
C_G(\phi) := \sum_{\substack{E\in \Ex_G(\phi^{-1})\\ \mathrm{gen. stab}(E)=\{1\}}}
[E\actsfromright G] 
-\sum_{\substack{D\in \Ex_G(\phi)\\ \mathrm{gen. stab}(D)=\{1\}}} 
[D\actsfromright G] \in \bZ[\Bir_{G,n-1}],
\end{equation}
taking values in the free abelian group on $G$-birational isomorphism classes of algebraic varieties of dimension $n-1$. 
In this case, the terms are the projectivized normal bundles of $Y_1$ and $Y_2$, taken with opposite signs.  It is worth mentioning that the underlying K3 surfaces $X_1$ and $X_2$ are isomorphic by Proposition~\ref{prop:nik-der}, and the group actions are conjugate under derived equivalences but not under automorphisms.
The difference of classes of exceptional loci in \eqref{eqn:cg} is {\em nonzero} due to Proposition~\ref{prop:ll} below. This gives an instance
where the refinement of the invariant $c(\phi)$ in \cite{LSZ}, \cite{LSh} using group actions yields new information.

\begin{prop}[cf. Thm. 2, \cite{LS}]
\label{prop:ll}
Let $X_1$ and $X_2$ be smooth projective $G$-varieties that are not uniruled. Then any $G$-equivariant stable birational equivalence 
$$
X_1 \times \bP^r \stackrel{\sim}{\dashrightarrow} X_2 \times \bP^s, 
$$
with trivial $G$-action on the second factors, 
arises from a $G$-equivariant birational equivalence
$$
X_1 \stackrel{\sim}{\dashrightarrow} X_2.
$$
\end{prop}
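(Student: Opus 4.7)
\textbf{Proof proposal for Proposition~\ref{prop:ll}.}

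The plan is to show that the stable birational equivalence $\phi: X_1 \times \bP^r \dashrightarrow X_2 \times \bP^s$ must descend through both projections, and that the two descended maps are inverse to each other. Let $\pi_i^{(k)}$ denote projection to the $k$-th factor of $X_i \times \bP^{r_i}$ (with $r_1 = r$, $r_2 = s$).

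\emph{Step 1: Descent of $\pi_2^{(1)} \circ \phi$ through $\pi_1^{(1)}$.} Consider the rational map
\begin{equation*}
\Phi := \pi_2^{(1)} \circ \phi : X_1 \times \bP^r \dashrightarrow X_2.
\end{equation*}
For a very general point $x \in X_1$, the restriction of $\Phi$ to the fiber $\{x\} \times \bP^r \simeq \bP^r$ is a rational map from a rationally connected variety to $X_2$. Since $X_2$ is not uniruled, the closure of its image must be a point: otherwise it would be a positive-dimensional uniruled subvariety of $X_2$. Hence $\Phi$ is constant on the generic fiber of $\pi_1^{(1)}$, and by the rigidity of rational maps with irreducible general fibers it factors as $\Phi = f \circ \pi_1^{(1)}$ for a unique rational map $f : X_1 \dashrightarrow X_2$. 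Apply the same argument to $\phi^{-1}$ to obtain $g : X_2 \dashrightarrow X_1$ with $\pi_1^{(2)} \circ \phi^{-1} = g \circ \pi_2^{(2)}$.

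\emph{Step 2: $f$ and $g$ are mutually inverse.} The identity $\phi^{-1} \circ \phi = \mathrm{id}$ on $X_1 \times \bP^r$ gives
\begin{equation*}
\pi_1^{(1)} = \pi_1^{(1)} \circ \phi^{-1} \circ \phi = g \circ \pi_2^{(2)} \circ \phi = g \circ f \circ \pi_1^{(1)},
\end{equation*}
where the middle equality uses the factorization of Step 1 applied to $\phi^{-1}$, and the last equality uses the factorization applied to $\phi$. Since $\pi_1^{(1)}$ is dominant, $g \circ f = \mathrm{id}_{X_1}$, and symmetrically $f \circ g = \mathrm{id}_{X_2}$, so $f$ is a birational equivalence.

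\emph{Step 3: Equivariance.} Because $G$ acts trivially on the $\bP^{r_i}$ factors, the projections $\pi_i^{(1)}, \pi_i^{(2)}$ are $G$-equivariant, and so is $\Phi = \pi_2^{(1)} \circ \phi$. The descent $f$ is the unique rational map making $\Phi = f \circ \pi_1^{(1)}$, so its graph is obtained from that of $\Phi$ by quotienting out the $\bP^r$-direction; this operation commutes with the $G$-action, so $f$ is $G$-equivariant. Similarly for $g$.

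\emph{Main obstacle.} The subtle point is Step 1, namely the passage from ``constant on generic fibers'' to a genuine factorization as rational maps. This requires the base $X_1$ to have generic fiber of $\pi_1^{(1)}$ geometrically irreducible, which is automatic since $\pi_1^{(1)}$ is a trivial $\bP^r$-bundle, together with the standard fact that a rational map constant on the generic fiber of a flat morphism with irreducible generic fiber descends uniquely. The non-uniruledness hypothesis enters precisely here: without it, the map on fibers could be nonconstant and no descent would exist. Equivariance of the descent then follows formally from uniqueness.
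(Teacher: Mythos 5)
Your argument is correct and is, in substance, the paper's proof with the black box opened: the paper simply observes that, since $X_i$ is not uniruled, the projection $X_i\times\bP^{r_i}\to X_i$ is the maximal rationally connected (MRC) fibration, and then invokes functoriality of MRC fibrations (Koll\'ar, IV.5.5) to descend $\phi$ to a birational map $X_1\dashrightarrow X_2$; equivariance follows from the canonicity/uniqueness of the descent exactly as in your Step 3. Your Steps 1--2 are precisely the proof of that functoriality in this special case, so nothing is gained or lost by not citing it, and making the descent and the inverse explicit is arguably cleaner.

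One justification in Step 1 is too quick as written. The assertion ``since $X_2$ is not uniruled, the image of $\{x\}\times\bP^r$ must be a point, otherwise it would be a positive-dimensional uniruled subvariety of $X_2$'' is not by itself a contradiction: a non-uniruled variety can perfectly well contain positive-dimensional uniruled subvarieties (a K3 surface contains rational curves). What saves the argument is dominance: $\Phi=\pi_{X_2}\circ\phi$ is dominant, so if the restriction to a general fiber $\{x\}\times\bP^r$ had positive-dimensional image, then these images would sweep out a dense subset of $X_2$, and since each such image is covered by (non-constant) rational curves through its general point, a general point of $X_2$ would lie on a rational curve, contradicting non-uniruledness of $X_2$. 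With that repair the descent, the inverse computation in Step 2, and the equivariance-by-uniqueness argument in Step 3 all go through. (There are also harmless notational slips: the maps you call $\pi_1^{(2)}$ and $\pi_2^{(2)}$ in Step 1--2 should be the projections to $X_1$ and $X_2$ respectively.)
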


\begin{proof}
Our assumption -- that $X_1$ and $X_2$ are not uniruled -- means
that 
$$X_1 \times \bP^r \ra X_1, \quad X_2\times \bP^s \ra X_2$$
are maximal rationally-connected (MRC) fibrations. 
Since $X_1\times \bP^r \stackrel{\sim}{\dashrightarrow}X_2 \times \bP^s$,
the functoriality of MRC fibrations \cite[IV.5.5]{K-book} gives a natural birational map
$$X_1 \stackrel{\sim}{\dashrightarrow} X_2.$$
When the varieties admit $G$-actions, the induced birational map is
compatible with these actions. 
\end{proof}

\section{Enriques involutions}
\label{sect:genEnr}

Let $S$ be an Enriques surface over $\bC$. Its universal cover is a K3 surface
$X$ with covering involution $\epsilon:X \rightarrow X$, a fixed-point-free
automorphism of order two, called an {\em Enriques involution}. 

The classification of Enriques surfaces $S$ up to derived
equivalence boils down to the classification of
pairs $(X,\epsilon)$ up to $C_2$-equivariant derived
equivalence \cite[\S 6]{BrMaMZ} (and \cite{BrMaJGP} more generally). Derived equivalent Enriques surfaces are isomorphic \cite[Prop.~6.1]{BrMaMZ}.

A number of authors have classified Enriques involutions on a given K3 surface $X$, modulo its automorphisms $\Aut(X)$: 
\begin{itemize}
\item 
Dolgachev \cite{DolgInv} gave the first examples with finite $\Aut(S)$; Kondo \cite{kondo} offered examples of other types. See the Bibliographic Notes of \cite[Ch.~8]{DolgKon} for more history, including early contributions by Fano.  
\item 
Ohashi showed that there 
finitely many $\Aut(X)$-orbits of such involutions. In the Kummer case, the possible quotients are classified by nontrivial elements of the discriminant group of the N\'eron-Severi group $\NS(X)$.
There are 15 on general Kummer surfaces of product type, 31 in a general Jacobian Kummer surface, but the number is generally unbounded \cite{Ohashi}, \cite{Ohashi-jacobian}. 
\item
 Shimada and Veniani consider {\em singular} (i.e.~rank $20$) K3 surfaces; one of their results is a parametrization of $\Aut(X)$-orbits on the set 
of Enriques involutions; the number of such orbits depends only on the genus of the transcendental lattice $T(X)$  \cite[Thm. 3.19]{ShiV}. 
\end{itemize}

These results are based on lattice theory:
two Enriques involutions on a K3 surface $X$ are conjugate via $\Aut(X)$ if an only if the corresponding Enriques quotients are isomorphic 
\cite[Prop. 2.1]{Ohashi}. 

Let 
$$
\mathrm M:=\mathrm U \oplus \mathrm E_8(-1)
$$
be the unique even unimodular hyperbolic lattice of rank 10; we have
$$
\Pic(S)/\mathrm{torsion}\simeq \mathrm M
$$
and 
$$\Pic(X) \supseteq \mathrm M(2)$$
as a primitive sublattice. This coincides with the invariant sublattice $$
\Pic(X)^{\epsilon=1} \subset \Pic(X)
$$ 
under the involution $\epsilon$.  Let $\mathrm N$ denote the orthogonal complement to $\mathrm M$ in $\rH^2(X,\bZ)$,
which coincides with $\rH^2(X,\bZ)^{\epsilon=-1}$; note that
$T(X) \subset \mathrm N$.  
We have
$$
\mathrm N \simeq \rU \oplus \rU(2) \oplus \rE_8(-2)
$$
which has signature $(2,10)$.  
Thus
$$\Pic(X)^{\epsilon=-1} = T(X)^{\perp} \subset \rN$$
has negative definite intersection form.  
The following result gives a criterion for the existence of Enriques involutions \cite[Thm. 1]{keum}, \cite[Thm. 2.2]{Ohashi}, \cite[Thm. 3.1.1]{ShiV}:

\begin{prop} 
\label{prop:enr-exist}
Let $X$ be a K3 surface. Enriques involutions on $X$ correspond to
the following data:
Primitive embeddings 
$$T(X) \subset \mathrm \rN \subset \rH^2(X,\bZ)$$
such that the orthogonal complement to $T(X)$ in $\rN$ does not
contain $(-2)$-classes.  
\end{prop}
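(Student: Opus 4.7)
The plan is to prove the two directions of the correspondence, with the Torelli Theorem serving as the bridge between the lattice datum and the geometric involution.

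Forward direction: Suppose $\epsilon : X \to X$ is an Enriques involution. From the discussion preceding the statement, the anti-invariant part $\rH^2(X,\bZ)^{\epsilon=-1}$ is isomorphic to $\rN = \rU \oplus \rU(2) \oplus \rE_8(-2)$. Because $\epsilon$ is anti-symplectic, the holomorphic two-form lies in the $(-1)$-eigenspace, and so $T(X) \subset \rN$. The inclusion is primitive, being the restriction of the primitive embedding $T(X)\subset \rH^2(X,\bZ)$. The orthogonal complement of $T(X)$ inside $\rN$ is precisely $\Pic(X)^{\epsilon=-1}$. If this contained a $(-2)$-class $\delta$, then by Riemann--Roch either $\delta$ or $-\delta$ is effective; after applying reflections one obtains an irreducible $(-2)$-curve in the anti-invariant Picard, forcing a fixed component and contradicting fixed-point-freeness of $\epsilon$.

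Reverse direction: Given a primitive embedding $T(X) \subset \rN \subset \rH^2(X,\bZ)$ with no $(-2)$-classes in the orthogonal complement of $T(X)$ in $\rN$, define an involution $\iota$ on $\rH^2(X,\bZ)$ by acting as $+1$ on $\rN^{\perp} \cong \rM(2)$ and as $-1$ on $\rN$. Since $T(X)\otimes\bC$ contains the holomorphic two-form and lies entirely in the $(-1)$-eigenspace, $\iota$ is a Hodge isometry with $\iota^*\omega = -\omega$. To invoke the strong Torelli Theorem I must check that $\iota$ preserves a K\"ahler class; the absence of $(-2)$-classes in $T(X)^{\perp}\cap \rN$ means that no $(-2)$-reflections in the anti-invariant Picard interfere with the chamber decomposition, so a Weyl group element can be applied (acting trivially on $\rN^\perp$) to send $\iota$ to a Hodge isometry that fixes an ample class. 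Torelli then produces an automorphism $\epsilon : X \to X$ realizing $\iota$.

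Fixed-point-freeness and the final step: The existence of $\epsilon$ is now in hand; it remains to show that it is fixed-point-free. I would appeal to the classification of anti-symplectic involutions recalled in Section~\ref{sect:geni}: the invariants $(r,a,\delta)$ are read off from the invariant lattice, which by construction contains the saturated copy $\rN^\perp \cong \rM(2)$ of rank $10$ with discriminant form of $(\bZ/2\bZ)^{10}$ type, i.e., $(r,a,\delta)=(10,10,0)$. In the Alexeev--Nikulin list, this is precisely the Enriques case, which by Nikulin's classification corresponds to a fixed-point-free involution; equivalently, the hypothesis that $T(X)^\perp \cap \rN$ contains no $(-2)$-class rules out the appearance of isolated fixed points or fixed curves that would otherwise be detected by such a class.

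The main obstacle is the reverse direction: translating the purely lattice-theoretic hypothesis (no $(-2)$-classes in $T(X)^\perp \cap \rN$) into the geometric conclusion that the Torelli-produced involution is fixed-point-free. The argument rests on the standard dictionary between $(-2)$-classes in the anti-invariant Picard group and fixed loci of non-symplectic involutions on K3 surfaces, combined with the uniqueness of the Enriques invariants $(10,10,0)$.
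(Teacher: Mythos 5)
The paper does not prove this statement: it is quoted as a known criterion with citations to Keum, Ohashi, and Shimada--Veniani, so there is no in-paper argument to compare against. Your outline is essentially the standard proof from those sources --- anti-invariant lattice of an Enriques involution is $\rN=\rU\oplus\rU(2)\oplus\rE_8(-2)$, and conversely one builds the involution as $(+1,-1)$ on $\rN^\perp\oplus\rN$, adjusts by the Weyl group, applies strong Torelli, and identifies the fixed locus via Nikulin's $(r,a,\delta)=(10,10,0)$ classification. The skeleton is right.

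Three steps need repair. First, in the forward direction your justification for the absence of $(-2)$-classes is muddled: the clean argument is that if $\delta\in\Pic(X)^{\epsilon=-1}$ has $\delta^2=-2$ then (after replacing $\delta$ by $-\delta$) $\delta$ is effective by Riemann--Roch, whence $\epsilon^*\delta=-\delta$ is also effective, which is absurd; no reflections and no appeal to fixed-point-freeness are needed (indeed the statement holds for \emph{any} involution). Second, in the reverse direction you define $\iota$ only on the finite-index sublattice $\rN^\perp\oplus\rN\subset\rH^2(X,\bZ)$ and never check that it extends to the overlattice; this is where one uses that $d(\rN^\perp)\simeq d(\rN)\simeq(\bZ/2\bZ)^{10}$ is $2$-elementary, so $(+1,-1)$ acts trivially on the glue group and the extension exists. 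Third, your parenthetical that the Weyl group element ``acts trivially on $\rN^\perp$'' is false --- reflections in $(-2)$-classes of $\Pic(X)$ move $\rN^\perp=\rM(2)$; what is true is that they act trivially on $T(X)$, so conjugating $\iota$ by such a reflection replaces the embedding $T(X)\subset\rN\subset\rH^2(X,\bZ)$ by an isomorphic one, which is all the argument requires. The correct use of the no-$(-2)$-class hypothesis at this point is that $\Pic(X)^{\iota=1}\otimes\bR$ cannot lie in any wall $\delta^\perp$, since such a $\delta$ would lie in $\Pic(X)^{\iota=-1}=T(X)^\perp\cap\rN$; hence the invariant part meets the interior of a chamber. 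Finally, your closing ``equivalently'' conflates two different mechanisms: fixed-point-freeness follows from the invariant lattice having invariants $(10,10,0)$ in Nikulin's classification, not directly from the $(-2)$-class hypothesis (which instead guarantees that the involution exists as an automorphism).
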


In particular, let $X$ be a K3 surface with an Enriques involution. Then:
\begin{itemize}
\item $\rk \Pic(X)\ge 10$, 
\item if $\rk \Pic(X)=10$ then 
there is a unique such involution,
\item if $\rk \Pic(X)=11$ then $\Pic(X)$ is isomorphic to  \cite[Prop. 3.5]{Ohashi}
\begin{itemize}
\item $\rU(2) \oplus \rE_8 \oplus \langle -2n\rangle$, $n\ge 2$, or
\item $\rU\oplus \rE_8(2)\oplus \langle -4n\rangle$, $n\ge 1$.  
\end{itemize}
\end{itemize}

\begin{prop}
\label{prop:enr-derived}
Let $X$ and $Y$ be derived equivalent K3 surfaces. Assume that $X$ admits an Enriques involution. Then $X$ is isomorphic to $Y$.  In particular, 
the existence of an Enriques involution is a derived invariant. 
\end{prop}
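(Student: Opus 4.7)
The plan is to reduce to a direct application of Proposition~\ref{prop:fiddle} combined with the Hosono--Lian--Oguiso--Yau counting of Fourier--Mukai partners, mirroring the strategy already used in the proof of Proposition~\ref{prop:nik-der}. By Proposition~\ref{prop:enr-exist}, the existence of an Enriques involution on $X$ forces $\rk\Pic(X)\ge 10$, and if $\rk\Pic(X)\ge 12$ then Proposition~\ref{prop:dereq} immediately gives $X\simeq Y$. Hence I would reduce the problem to $\rk\Pic(X)\in\{10,11\}$.

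The key structural input is that $\rE_8(-2)\subset \rM(2)=\Pic(X)^{\epsilon=+1}$ is saturated inside $\Pic(X)$. This is a short check: if $v\in \Pic(X)$ and $nv\in \rE_8(-2)$ for some $n>0$, then $n\epsilon(v)=\epsilon(nv)=nv$, so $\epsilon(v)=v$ and $v\in \rM(2)=\rU(2)\oplus\rE_8(-2)$; writing $v=u+e$ with $u\in\rU(2)$ and $e\in\rE_8(-2)$, the condition $nv\in \rE_8(-2)$ forces $u=0$, hence $v\in\rE_8(-2)$.

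With this saturation in hand, Proposition~\ref{prop:fiddle} applies to $\rL=\Pic(X)$: we already know $\ell(d(\rL))\le 11$ for any Picard lattice of a K3 surface, and in the borderline rank-$10$ case the $2$-adic hypothesis of Proposition~\ref{prop:nikulin2} is supplied by Remark~\ref{rema:niksmall} since $d(\rL)$ contains copies of $(\bZ/2\bZ)^3$ coming from the $\rE_8(-2)$-summand. I would conclude that $\Pic(X)$ is unique in its genus and that $\mathrm O(\Pic(X))\to \mathrm O(q_{\Pic(X)})$ is surjective.

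Finally, derived equivalence forces $\Pic(Y)$ to lie in the genus of $\Pic(X)$, so $\Pic(Y)\simeq\Pic(X)$; together with the Hodge isomorphism $T(Y)\simeq T(X)$ furnished by derived equivalence, the surjectivity statement lets us adjust the Picard-side lattice isomorphism so that the gluing data on the two sides agree, yielding a Hodge isometry $\rH^2(X,\bZ)\simeq \rH^2(Y,\bZ)$. After a standard reflection/sign adjustment to carry Kähler cone to Kähler cone, the Torelli theorem produces $X\simeq Y$; equivalently this is the content of the HLOY counting formula under the conclusions of Proposition~\ref{prop:fiddle}, exactly as invoked in the proof of Proposition~\ref{prop:nik-der}. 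The second assertion---that possession of an Enriques involution is a derived invariant---is then immediate, since $Y\simeq X$ manifestly inherits the involution. The main obstacle I anticipate is only the rank-$10$ bookkeeping for Proposition~\ref{prop:nikulin2}, but Remark~\ref{rema:niksmall} was designed precisely for such situations; everything else is a routine replay of the Nikulin-case argument.
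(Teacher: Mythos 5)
Your argument is correct, and its overall skeleton is the one the paper uses: dispose of $\rk\Pic(X)\ge 12$ via Proposition~\ref{prop:dereq}, and in ranks $10$ and $11$ reduce the statement to two lattice-theoretic facts --- $\Pic(X)$ is unique in its genus and $\mathrm O(\Pic(X))\to\mathrm O(q_{\Pic(X)})$ is surjective --- which, combined with the Hodge isometry $T(X)\simeq T(Y)$ and the Torelli theorem (equivalently, the HLOY counting formula), force $X\simeq Y$. Where you genuinely diverge is in how those two facts are established. The paper simply cites \cite[Lem.~3.1.7]{ShiV} (together with Nikulin's Theorem~1.14.2) for the assertion that the Picard lattice of a K3 surface with an Enriques involution has these properties. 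You instead observe that $\rE_8(-2)\subset\rM(2)=\Pic(X)^{\epsilon=1}$ is saturated in $\Pic(X)$ --- your verification is fine, and indeed follows even more directly from the fact that an invariant sublattice is automatically primitive and $\rE_8(-2)$ is an orthogonal summand of $\rM(2)$ --- and then invoke Proposition~\ref{prop:fiddle}, whose hypotheses (even hyperbolic, $\rE_8(-2)$ saturated, $\ell(d(\rL))\le 11$) are all met. This makes the proof self-contained within the paper's own machinery, at the cost of leaning on the borderline $2$-adic case of Proposition~\ref{prop:nikulin2} via Remark~\ref{rema:niksmall}; but that is exactly the bookkeeping already carried out in the proof of Proposition~\ref{prop:fiddle}, so nothing new needs checking. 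Both routes land on the same conclusion, and your final gluing/Torelli step and the deduction of the ``derived invariant'' assertion are standard and correct.
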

\begin{proof}
In Picard rank $\ge 12$, derived equivalence implies isomorphism. If $X$ and $Y$ and derived equivalent of rank $10$ and $X$ admits an Enriques involution, then $T(X) \simeq T(Y)$ and 
$\Pic(X)$ and $\Pic(Y)$ are {\em stably isomorphic}. 
In Picard ranks 10 and 11, it suffices to show that the lattice $\rM(2)$ is unique in its genus and all automorphisms of the discriminant
group $(d(\rM(2)), q_{\rM(2)}))$ lift to automorphisms
of $\rM(2)$. This is implied by \cite[Thm.~1.14.2]{nik-lattice}.  Indeed, \cite[Lem.~3.1.7]{ShiV} shows that $\Pic(X)$
satisfies these two conditions whenever $X$ admits
an Enriques involution.  
\end{proof}

Corollary~\ref{coro:antisyminv} implies (cf.~\cite[\S 6]{BrMaMZ}): 

\begin{prop}
\label{prop:any}
Any 
$C_2$-equivariant derived autoequivalence 
$$
(X,\epsilon_1) \sim (X,\epsilon_2)
$$ 
arises from an automorphism of $X$.
\end{prop}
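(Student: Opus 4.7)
The plan is to reduce the statement directly to Corollary~\ref{coro:antisyminv}, whose hypothesis is \emph{anti-symplectic} involutions. So the one verification I need is that every Enriques involution is anti-symplectic.

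First I would record this verification: if $\epsilon:X\to X$ is an Enriques involution, the quotient $S=X/\epsilon$ is an Enriques surface, which has $\rH^{2,0}(S)=0$. Consequently the $\epsilon$-invariant part of $\rH^{2,0}(X)$ must vanish, so for a generator $\omega$ of $\rH^{2,0}(X)$ one has $\epsilon^*\omega=-\omega$. Equivalently, $\epsilon$ acts by $-1$ on the transcendental lattice $T(X)$, confirming that $(X,\epsilon_1)$ and $(X,\epsilon_2)$ are both K3 surfaces equipped with anti-symplectic involutions in the sense used in Section~\ref{sect:geni}.

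Next I would invoke Corollary~\ref{coro:antisyminv} applied to the given $C_2$-equivariant derived autoequivalence $(X,\epsilon_1)\sim(X,\epsilon_2)$. That corollary produces an equivariant isomorphism of underlying K3 surfaces, which in the present setting is precisely an automorphism $\varphi\in\Aut(X)$ satisfying
\[
\varphi\circ\epsilon_1 \;=\; \epsilon_2\circ\varphi .
\]
Hence the two $C_2$-actions are conjugate inside $\Aut(X)$, and the equivariant derived autoequivalence is induced by the honest automorphism $\varphi$.

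The only step that requires any work is the anti-symplecticity observation, and that is really just the standard fact that Enriques surfaces have geometric genus zero. Once this is in hand there is no further obstacle: the heavy lifting (that stable equivalence of the invariant and anti-invariant sublattices forces genuine equivalence, together with the lifting of discriminant automorphisms provided by Propositions~\ref{prop:nikulin1}--\ref{prop:nikulin2} and the Torelli Theorem) has already been carried out in the proof of Corollary~\ref{coro:antisyminv}. Thus Proposition~\ref{prop:any} is obtained as a formal specialization of that corollary to the Enriques case.
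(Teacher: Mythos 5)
Your proposal is correct and matches the paper exactly: the paper derives Proposition~\ref{prop:any} as an immediate consequence of Corollary~\ref{coro:antisyminv}, with no further argument. Your added verification that an Enriques involution is anti-symplectic (via $\rH^{2,0}$ of the \'etale quotient, or equivalently because symplectic involutions have eight fixed points) is the correct justification for why the corollary applies.
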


We observe a corollary of Proposition~\ref{prop:nikulin2}:
Let $(X_1,\epsilon_1)$ and $(X_2,\epsilon_2)$ denote K3 surfaces with
Enriques involutions. 
They are orientation reversing (i.e. skew) conjugate if
\begin{itemize}
\item{$\tau: T(X_1)\stackrel{\sim}{\ra} T(X_2)$ as lattices, with compatible
Hodge structures;}
\item{$\Pic(X_1)^{\epsilon_1=-1}$ and $\Pic(X_2)^{\epsilon_2=-1}$ have the
same discriminant quadratic form.}
\end{itemize}

\

We explore this in more detail in the case of singular (rank $20$) 
K3 surfaces. The existence of involutions on 
singular K3 surfaces is governed by: 

\begin{prop} \cite{Sert}
Let $X$ be a singular K3 surface with transcendental lattice
$T(X)$ of discriminant $d$. 
There is no Enriques involution on $X$ if and only if $d\equiv 3 \pmod 8$ or 
$$
T(X) = \begin{pmatrix} 2 & 0 \\ 0 & 2 \end{pmatrix}, \quad  
\begin{pmatrix} 2 & 0 \\ 0 & 4\end{pmatrix}, \text{ or } \begin{pmatrix} 2 & 0 \\ 0 & 8\end{pmatrix}.
$$
\end{prop}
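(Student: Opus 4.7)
The plan is to invoke the criterion of Proposition~\ref{prop:enr-exist} and translate the question into pure lattice theory. Since $X$ is singular, $T(X)$ is a positive definite even lattice of rank $2$ and signature $(2,0)$, so what must be decided is: for which $T = T(X)$ does there exist a primitive embedding $T \hookrightarrow \rN = \rU \oplus \rU(2) \oplus \rE_8(-2)$ compatible with the Hodge data, such that the orthogonal complement $K$ (a negative definite lattice of rank $10$) contains no $(-2)$-class? I would then split the analysis into three stages: existence of the embedding, genus identification of $K$, and inspection of short vectors.

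For the first stage, I would enumerate primitive embeddings $T \hookrightarrow \rN$ using Nikulin's gluing formalism (the refinement of Proposition~\ref{prop:nikulin3}): such an embedding is equivalent to a triple $(K, H, \varphi)$, where $K$ has signature $(0,10)$, $H \subset d(T) \oplus d(K)$ is an isotropic subgroup whose projections are injective on both factors, and $\varphi$ identifies the resulting orthogonal quotient with $(d(\rN), q_\rN)$. Since $d(\rN) \simeq (\bZ/2)^{10}$ is $2$-elementary with discriminant form prescribed by $\rU(2) \oplus \rE_8(-2)$, the gluing is forced to be $2$-elementary, and the discriminant balance gives $\operatorname{disc}(K) = d \cdot 2^{10-2|H|}$. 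All other primes are controlled by odd-part constraints, which are unobstructed.

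For the second stage, I would perform the $2$-adic obstruction analysis that produces the $d\equiv 3 \pmod 8$ condition. Rank $2$ positive even lattices $T$ of discriminant $d$ with $d$ odd have $q_{T_2}$ isomorphic to one of a short list of $2$-adic forms on $(\bZ/d_2\bZ)^?$; when $d$ is odd, $d(T)$ is cyclic of order $d$ and the Brown/Gauss invariant of $q_{T_2}$ depends only on $d \bmod 8$. Composing this with the fixed Brown invariant of $q_\rN$ shows that, after any permissible isotropic gluing, the complement $(d(K),q_K)$ is forced into a Witt class that can be realized by a negative definite rank $10$ lattice \emph{without} $(-2)$-vectors precisely when $d \not\equiv 3 \pmod 8$. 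For $d \equiv 3 \pmod 8$, the complement always carries a distinguished root, because the relevant Jordan component is represented only by lattices containing $\rA_1$-summands.

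Third, I would handle the small exceptional cases $T(X) = \operatorname{diag}(2,2),\operatorname{diag}(2,4),\operatorname{diag}(2,8)$, corresponding to $d=4,8,16$. Here $\operatorname{disc}(K)$ is extremely small, so a finite search -- using the uniqueness in the genus for $2$-elementary lattices of rank $10$ with many $\bZ/2$ summands (Proposition~\ref{prop:nikulin2} together with Remark~\ref{rema:niksmall}) -- enumerates all candidate $K$, and each contains a $(-2)$-vector. For all other $T(X)$ with $d \not\equiv 3 \pmod 8$, I would conclude by exhibiting at least one primitive embedding for which the complement has minimum $\ge 4$; this is a direct construction, typically starting from an embedding $T \hookrightarrow \rU \oplus \rU(2)$ and taking $K \supset \rE_8(-2)$ plus a controlled rank $0$ or $1$ adjustment. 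The main obstacle is the second step: extracting the congruence $d\equiv 3\pmod 8$ requires an honest Brown invariant (Gauss sum) calculation matching the $2$-adic genus of $T$ against that of $\rN$, and showing that failing this matching is \emph{equivalent} to forcing roots in $K$, rather than merely preventing a particular embedding.
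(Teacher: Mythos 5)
First, a point of comparison: the paper does not prove this proposition at all --- it is quoted from Sert\"oz \cite{Sert} --- so there is no in-paper argument to measure your proposal against. Your opening move, reducing via Proposition~\ref{prop:enr-exist} to the existence of a primitive embedding $T(X)\hookrightarrow \rN=\rU\oplus\rU(2)\oplus\rE_8(-2)$ whose orthogonal complement $K$ contains no $(-2)$-classes, is the right one and is how the cited proof begins. But already in your first stage there is an error: for odd $d$ the glue group $H=\rN/(T\oplus K)$ injects into $d(T)\cong\bZ/d\bZ$, so it has \emph{odd} order --- it is never $2$-elementary --- and the correct bookkeeping is $|d(K)|=2^{10}|H|^2/d$ (so $q_K\cong q_\rN\oplus(-q_T)$ and $|d(K)|=2^{10}d$ when the odd glue is full), not $\operatorname{disc}(K)=d\cdot 2^{10-2|H|}$.

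The serious gap is in your second stage. A Brown-invariant (Gauss sum) computation cannot produce the congruence: since $d(T)$ has odd order, $\operatorname{Br}\bigl(q_\rN\oplus(-q_T)\bigr)\equiv -2\pmod 8$ for \emph{every} odd $d$, so Milgram's condition is satisfied identically for $d\equiv 3$ and $d\equiv 7\pmod 8$ and sees no dichotomy. Moreover the obstruction is not that ``the complement always carries a root'': for $d\equiv 3\pmod 8$ the complement does not exist. Indeed, $\ell(d(K)_2)=10=\operatorname{rank}K$ and $q_{K,2}\cong u^{(2)}_+(2)^{\oplus 5}$ is of even type, so $K_2\cong M(2)$ for an even unimodular $\bZ_2$-lattice $M$ of rank $10$; matching discriminant forms forces $M\cong \rU^{\oplus5}$ over $\bZ_2$, whence $\det K=2^{10}d\equiv -2^{10}$ modulo $(\bZ_2^*)^2$, i.e.\ $d\equiv 7\pmod 8$. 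Equivalently, $\rN_2=T_2\perp K_2$ forces the unimodular Jordan block of $\rN_2$ (a copy of $\rU$, determinant $-1$) to coincide with $T_2$, and an even binary form of odd determinant has $T_2\cong \rU$ iff $d\equiv7$ and $T_2\cong \rV^{(2)}$ iff $d\equiv3\pmod 8$. So the congruence is an emptiness-of-genus statement detected by the $2$-adic determinant of the unimodular Jordan constituent, not by the Brown invariant and not by forced $\rA_1$-summands. What then remains --- and what your outline correctly identifies but does not supply --- is the positive direction: for $d\equiv7\pmod 8$ (and for even $d$ outside the three exceptions) one must exhibit a member of the genus of $K$ with minimum $\ge 4$, i.e.\ a definite rank-$10$ lattice of the right genus and determinant $d$ not representing $2$, together with the finite check that the three listed even-determinant cases fail.
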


The ``most algebraic example'', i.e.~the smallest discriminant admitting an Enriques involution, has
$$T(X) \simeq \left( \begin{matrix} 2 & 1 \\ 1 & 4 \end{matrix} \right)
.$$
In this situation there are two possibilities. We write the maximal
sublattices
$$
\rN \subset \Pic(X)$$
such that the involution $\epsilon$ acts via $-1$.  

We follow the notation \cite[Table~3.1]{ShiV}. We consider lattices
$$
N^{144}_{10,7}(2), \quad N^{242}_{10,7}(2)
$$
where
$$
N^{242}_{10,7}(-1) \simeq 
\left( \begin{matrix} 2 & 1 \\ 1 & 4 \end{matrix} \right) \oplus \rE_8
$$
with $\rE_8$ positive definite and
$$N^{144}_{10,7}(2)(-1) \simeq
\left(\begin{matrix} 
2& 1& 1& 0& 1& 0& 0& 0& 0& 0\\
1& 2& 0& 0& 0& 0& 0& 0& 0& 0\\
1& 0& 2& 1& 0& 0& 0& 0& 0& 0\\
0& 0& 1& 4& 0& 0& 0& 0& 0& 0\\
1& 0& 0& 0& 2& 1& 0& 0& 0& 0\\
0& 0& 0& 0& 1& 2& 1& 0& 0& 0\\
0& 0& 0& 0& 0& 1& 2& 1& 0& 0\\
0& 0& 0& 0& 0& 0& 1& 2& 1& 0\\
0& 0& 0& 0& 0& 0& 0& 1& 2& 1\\
0& 0& 0& 0& 0& 0& 0& 0& 1& 2
\end{matrix} \right).$$
According to {\tt magma}, these two lattices are inequivalent but are in the same spinor genus
thus are stably equivalent.

These involutions are not derived equivalent. Indeed, passing to Mukai lattices adds a hyperbolic summand $\rU$
on which the involution acts trivially. However, in the case
at hand we are
stabilizing the $(-1)$-eigenspace. Thus these involutions are ``skew equivalent'' in the sense of Section~\ref{sect:skew}.  

\section{Postscript on involutions in higher dimensions}

There are many papers addressing the structure of involutions
of higher-dimensional irreducible holomorphic symplectic varieties. 
\begin{itemize}
\item
Symplectic involutions of varieties of $K3^{[n]}$-type and their fixed
loci are classified in \cite{KMO}. 
\item For varieties of Kummer type -- arising
from an abelian surface $A$ -- involutions associated with $\pm 1$ 
on $A$ are analyzed in \cite[Th.~4.4]{HTMMJ} and \cite[Th.~1.3]{KMO}.  
\item
Anti-symplectic involutions on varieties of $K3^{[n]}$-type of degree two
are studied in \cite{FMOS}. 
\item
Higher-dimensional analogs of Enriques involutions are studied
in \cite{OS}.  
\item
Involutions on cubic fourfolds -- both symplectic (see \cite{LZ} and \cite{HTcubic}) and anti-symplectic --
are studied in \cite{Marcubic}. The corresponding actions on lattices are
described explicitly.  
\item
Involutions on O'Grady type examples are considered in \cite{MM}.
\end{itemize}
It is natural to consider whether derived equivalences of
involutions on K3 surfaces $X_1$ and $X_2$ may be understood via 
equivalences of the induced involutions on punctual Hilbert schemes and other
moduli spaces.

\bibliographystyle{alpha}
\bibliography{autoeq}

\end{document}